\begin{document}


 \renewcommand{\theequation}{\thesection.\arabic{equation}}
 \newcommand{\eps}{\varepsilon}
 \newcommand{\lam}{\lambda}
 \newcommand{\To}{\rightarrow}
 \newcommand{\as}{{\rm d}P\times {\rm d}t-a.s.}
 \newcommand{\ps}{{\rm d}P-a.s.}
 \newcommand{\jf}{\int_t^T}
 \newcommand{\tim}{\times}

 \newcommand{\F}{\mathcal{F}}
 \newcommand{\E}{\mathbf{E}}
 \newcommand{\N}{\mathbf{N}}
 \newcommand{\s}{\mathcal{S}}
 \newcommand{\T}{[0,T]}
 \newcommand{\LT}{L^2(\Omega,\F_T, P)}
 \newcommand{\Lt}{L^2(\Omega,\F_t, P)}
 \newcommand{\Ls}{L^2(\Omega,\F_s, P)}
 \newcommand{\Lr}{L^2(\Omega,\F_r, P)}
 \newcommand{\Lp}{L^p(\R^k)}
 \newcommand{\Lps}{L^2(\R^k)}
 \newcommand{\R}{{\bf R}}
 \newcommand{\RE}{\forall}

\newcommand{\gET}[1]{\underline{{\mathcal {E}}_{t,T}^g}[#1]}
\newcommand{\gEt}[1]{\underline{{\mathcal {E}}_{s,t}^g}[#1]}
\newcommand{\gETo}[1]{\underline{{\mathcal {E}}_{0,T}^g}[#1]}

\newcommand{\sgET}[1]{\overline{{\mathcal {E}}_{t,T}^g}[#1]}
\newcommand{\sgEt}[1]{\overline{{\mathcal {E}}_{s,t}^g}[#1]}
\newcommand{\sgETo}[1]{\overline{{\mathcal {E}}_{0,T}^g}[#1]}

\newcommand {\Lim}{\lim\limits_{n\rightarrow \infty}}
\newcommand {\Limk}{\lim\limits_{k\rightarrow \infty}}
\newcommand {\Limm}{\lim\limits_{m\rightarrow \infty}}
\newcommand {\llim}{\lim\limits_{\overline{n\rightarrow \infty}}}
\newcommand {\slim}{\overline{\lim\limits_{n\rightarrow \infty}}}
\newcommand {\Dis}{\displaystyle}

\newtheorem{Remark}[theorem]{\scshape\bf Remark}
\def\Remarkfont{\hbox{\hspace{-4pt}$\bm{:}\,\,\,\,\,$}}

\title {$L^p$ solutions of finite and infinite time interval BSDEs with non-Lipschitz coefficients}

\author{ShengJun Fan$^{\ast}$
\thanks{$^\ast$Corresponding author. Email: f\_s\_j@126.com\vspace{6pt}}
and Long Jiang\\\vspace{6pt}
{\em{College of Sciences, China University of Mining $\&$ Technology, Xuzhou, Jiangsu 221116,
P.R. China}}\\\vspace{6pt}
\received{9 February 2010}}

\maketitle


\begin{abstract}
In this paper, we are interested in solving multidimensional backward stochastic differential equations (BSDEs) in $L^p\ (p>1)$ under weaker assumptions on the coefficients, considering
both a finite and an infinite time interval. We establish a general existence and uniqueness result of solutions in $L^p\ (p>1)$ to finite and infinite time interval BSDEs with non-Lipschitz
coefficients, which includes the corresponding results in
\citet{Par90}, \citet{Mao95}, \citet{Chen97}, \citet{Cons01}, \citet{Wang03}, \citet{Chen00} and \citet{Wang09} as its particular
cases.
\end{abstract}

\begin{keywords}
Backward stochastic differential equation; Infinite time interval; Non-Lipschitz coefficients; Mao's condition; $L^p$ solution; Existence and uniqueness
\end{keywords}

\begin{classcode}
primary 60H10
\end{classcode}\bigskip


\section{Introduction}\label{introduction}
In this paper, we consider the following multidimensional backward stochastic differential
equation (BSDE for short in the remaining):
\begin{equation}\label{BSDE1}
y_t=\xi+\int_t^T g(s,y_s,z_s)\ {\rm d}s-\int_t^T z_s\ {\rm d}B_s,\ \ t\in\T,
\end{equation}
where the time horizon $T>0$ is a finite or infinite constant, the terminal condition $\xi$ is a
$k$-dimensional random variable, the generator $g(\omega,t,y,z):\Omega\tim\T\tim {\R}^{k}\tim{\R}^{k\tim d}\To{\R}^k$ is progressively
measurable for each $(y,z)$, and $B$ is a
$d$-dimensional Brownian motion. The solution ($y_{\cdot},z_{\cdot}$) is a pair of adapted
processes. The $(\xi,T,g)$ describe the coefficients (parameters) of BSDE \eqref{BSDE1}.\vspace{0.2cm}

Such equations, in the nonlinear case, were first introduced by \citet{Par90}, who established an existence and uniqueness result for solutions in $L^2$ to BSDEs under the Lipschitz assumption of the generator $g$. Since then, BSDEs have attracted much interest, and have become an important mathematical tool in many fields including financial mathematics, stochastic games and optimal control. In particular, much effort has been
made to relax the Lipschitz hypothesis on $g$, for instance, in the one dimensional setting ($k$=1), \citet{Lep97} have proved the existence of a solution in $L^2$ for BSDE \eqref{BSDE1} when
$g$ is continuous and of linear growth in $(y,z)$, \citet{Kob00} obtained the existence and uniqueness of a solution in $L^2$ when $g$ has a quadratic growth in $z$ and the terminal condition $\xi$ is bounded, and then \citet{Bri06} and \citet{Bri08} further extended the result of \citet{Kob00} to the case of unbounded terminal conditions.\vspace{0.2cm}

\citet{Mao95} proposed the following non-Lipschitz assumption for the generator $g$ of multidimensional BSDEs:
\begin{enumerate}
  \item[(H1)] $\as,\ \RE y_1,y_2\in \R^k,z_1,z_2\in\R^{k\times d},$
  $$|g(\omega,t,y_1,z_1)-g(\omega,t,y_2,z_2)|^2\leq\kappa(|y_1-y_2|^2)+c|z_1-z_2|^2,$$
\end{enumerate}
where $c>0$ and $\kappa(\cdot)$ is a concave and nondecreasing function from $\R^+$ to $\R^+$ such that $\kappa(0)=0$, $\kappa(u)>0$ for $u>0$ and $\int_{0^+}\kappa^{-1}(u)\ {\rm d}u=+\infty$. Under this assumption, he proved that BSDE \eqref{BSDE1} with $0<T<+\infty$ has a unique solution in $L^2$.\vspace{0.2cm}

\citet{Wang03} proposed another non-Lipschitz condition for the generator $g$ of multidimensional BSDEs and \citet{Wang09} further generalized it as follows:
\begin{enumerate}
  \item[(H2)] $\as,\ \RE y_1,y_2\in \R^k,z_1,z_2\in\R^{k\times d},$
  $$|g(\omega,t,y_1,z_1)-g(\omega,t,y_2,z_2)|^2\leq\kappa(t,|y_1-y_2|^2)+c|z_1-z_2|^2,$$
\end{enumerate}
where $c>1$ and $\kappa(\cdot,\cdot)\in {\bf S}[T,a(\cdot),b(\cdot)]$, here and henceforth for
$0<T\leq +\infty$, ${\bf S}[T,a(\cdot),b(\cdot)]$ denotes the set of functions $\kappa(\cdot,\cdot):\T\times \R^+\mapsto \R^+$ satisfying the following two conditions:
\begin{itemize}
  \item For fixed $t\in\T$, $\kappa(t,\cdot)$ is a continuous, concave and nondecreasing
        function with $\kappa(t,0)=0$, and for each $t\in\T$, $\kappa(t,u)\leq a(t)+b(t)u$,
        where the functions $a(\cdot),b(\cdot):\T\mapsto \R^+$ satisfy
        $\int_0^T\ [a(t)+b(t)]\ {\rm d}t<+\infty$;\vspace{0.2cm}
  \item The following ODE, $u'(t)=-\kappa(t,u),t\in\T$ with $u(T)=0$, has a unique solution
        $u(t)=0,\ t\in\T$.
\end{itemize}

Under (H2), they proved the existence and uniqueness of the solution in $L^2$ to BSDE \eqref{BSDE1} with $0<T<+\infty$ and, with the help of Bihari's inequality, they proved that their result includes that of \citet{Mao95}.\vspace{0.2cm}

Moreover, \citet{Chen97} and \citet{Chen00} proposed the following non-uniformly Lipschitz condition for the generator $g$ of multidimensional BSDEs:
\begin{enumerate}
  \item[(H3)] $\as,\ \RE y_1,y_2\in \R^k,z_1,z_2\in\R^{k\times d},$
  $$|g(\omega,t,y_1,z_1)-g(\omega,t,y_2,z_2)|\leq u(t)|y_1-y_2|+v(t)|z_1-z_2|,$$
\end{enumerate}
where $u(\cdot),v(\cdot):\T\mapsto \R^+$ satisfy $\int_0^T u(t)\ {\rm d}t<+\infty$ and $\int_0^T v^2(t)\ {\rm d}t<+\infty$.\vspace{0.2cm}

Under (H3), they established the existence and uniqueness of the solution in $L^2$ to BSDE \eqref{BSDE1} with $0< T\leq +\infty$.\vspace{0.2cm}

Furthermore, in the case where $0<T<+\infty$, \citet{Par99} established the existence and uniqueness result of a solution in $L^2$ for BSDE \eqref{BSDE1} where $g$ satisfies the particular monotonicity condition in $y$. Using the same monotonicity condition for $g$, \citet{Bri03} investigated
the existence and uniqueness of a solution in $L^p\ (p>1)$ for BSDE \eqref{BSDE1}.\vspace{0.2cm}

In this paper, we are interested in solving BSDEs in $L^p\ (p>1)$ under weaker assumptions on the coefficients, considering both a finite and an infinite time interval. We establish a
general existence and uniqueness theorem of solutions in $L^p\ (p>1)$ to finite time and infinite interval BSDEs (see Theorem \ref{theoremmainresult} in Section \ref{sectionmainresult}), which includes the corresponding results in \citet{Par90}, \citet{Mao95}, \citet{Chen97}, \citet{Cons01}, \citet{Wang03}, \citet{Chen00} and \citet{Wang09} as its particular cases. The paper is organized as follows. We introduce some preliminaries and lemmas in Section \ref{sectionpreliminariesandlemmas} and put forward and prove our main results in Section \ref{sectionmainresult}. Some examples, corollaries and remarks are given in Section \ref{sectionexamplecorollaryremark} to show that Theorem \ref{theoremmainresult} of this paper is a generalization of some results mentioned above. Finally, some further discussions with respect to our main result are provided in Section \ref{sectionexamplecorollaryremark}.\vspace{-0.2cm}


\section{Preliminaries and Lemmas}\label{sectionpreliminariesandlemmas}\setcounter{equation}{0}
Let us first introduce some notation. First of all, let us fix two real numbers $0<T\leq +\infty$ and $p>1$, and two positive integers $k$ and $d$. Let $(\Omega,\F,P)$ be a probability space carrying a standard $d$-dimensional Brownian motion $(B_t)_{t\geq 0}$. Let
$(\F_t)_{t\geq 0}$ be the natural $\sigma$-algebra generated by $(B_t)_{t\geq 0}$ and $\F=\F_T$. In this paper, the Euclidean norm of a vector $y\in\R^k$ will be defined by $|y|$, and for a $k\times d$ matrix $z$, we define $|z|=\sqrt{{\rm Tr}(zz^*)}$, where $z^*$ is the transpose of $z$. Let $a\wedge b$ represent the minimum of $a,b\in \R$ and  $\langle x,y\rangle$ the inner product of $x,y\in\R^k$. We denote by $\Lp$ the set of all $\R^k$-valued and $\F_T$-measurable random vectors $\xi$ such that $\E[|\xi|^p]<+\infty$. Let ${\s}^p(0,T;\R^k)$ denote the set of $\R^k$-valued, adapted and continuous processes
$(Y_t)_{t\in\T}$ such that
$$\|Y\|_{{\s}^p}:=\left\{\E\left[\sup_{t\in\T} |Y_t|^p\right] \right\}^{1/p}<+\infty.$$
Moreover, let ${\rm M}^p(0,T;\R^{k})$ (resp. ${\rm M}^p(0,T;\R^{k\times d})$) denote the set
of $(\F_t)$-progressively measurable ${\R}^{k}$-valued (${\R}^{k\times d}$-valued) processes $(Z_t)_{ t\in\T}$ such that
$$\|Z\|_{{\rm M}^p}:=\left\{ \E\left[\left(\int_0^T |Z_t|^2\
{\rm d}t\right)^{p/2}\right] \right\}^{1/p}<+\infty.$$
Obviously, both ${\s}^p$ and ${\rm M}^p$ are Banach spaces. As mentioned in the introduction, we will deal only with BSDEs which are equations of type \eqref{BSDE1}, where the terminal
condition $\xi$ belongs to the space $L^p(\R^k)$, and the generator $g$ is $(\F_t)$-progressively measurable for each $(y,z)$.

\begin{definition}\label{definitionBSDEsolution}
  A pair of processes $(y_t,z_t)_{t\in\T}$ is called a solution in $L^p$ to BSDE \eqref{BSDE1}, if $(y_t,z_t)_{t\in\T}\in {\s}^p(0,T;\R^{k})\times {\rm M}^p(0,T;\R^{k\times d})$
  and satisfies \eqref{BSDE1}.
\end{definition}

Let us introduce the following ``Backward Gronwall Inequality". We omit the standard proof.

\begin{lemma}\label{lemma1}
  Let $0<T\leq \infty$, $\alpha(t):\T\mapsto \R^+$ be a decreasing function, $\beta(t):\T\mapsto \R^+$ satisfy $\int_0^T \beta(s)\ {\rm d}s<+\infty$, and $u(t):\T\mapsto \R^+$ be a continuous function with $\sup_{t\in [0,T]}u(t)<+\infty$ such that
  $$u(t)\leq \alpha(t)+\int_t^T \beta(s)u(s)\ {\rm d}s,\ \ t\in [0,T].$$
  Then we have
  $$u(t)\leq \alpha(t)e^{\int_t^T \beta(s)\ {\rm d}s},\ t\in [0,T].$$
\end{lemma}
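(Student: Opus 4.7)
The plan is to iterate the hypothesized inequality and then pass to the limit. Writing $I(t):=\int_t^T \beta(s)\,{\rm d}s$, which is a finite nonincreasing function of $t$ bounded by $I(0)<+\infty$, and $M:=\sup_{s\in\T}u(s)<+\infty$, I would substitute the bound $u(s)\le \alpha(s)+\int_s^T\beta(r)u(r)\,{\rm d}r$ back into the right-hand side of the original inequality, apply Fubini's theorem to the resulting double integral, and exploit the monotonicity $\alpha(r)\le \alpha(t)$ for $r\ge t$ to replace $\alpha$ by its value at $t$. After $n$ iterations this should yield
\[
u(t)\;\le\;\alpha(t)\sum_{k=0}^{n}\frac{I(t)^k}{k!}\;+\;\frac{M\,I(t)^{n+1}}{(n+1)!},
\]
where the remainder arises from bounding the innermost $u$-factor by $M$.

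The technical core of the iteration is the $k$-fold integral identity
\[
\int_t^T \beta(s_1)\int_{s_1}^T \beta(s_2)\cdots \int_{s_{k-1}}^T \beta(s_k)\,{\rm d}s_k\cdots {\rm d}s_1 \;=\; \frac{I(t)^k}{k!},
\]
proved by induction on $k$ via the substitution $v=I(s_1),\,{\rm d}v=-\beta(s_1)\,{\rm d}s_1$ at each step, or equivalently by symmetrizing over the simplex $\{t\le s_1\le\cdots\le s_k\le T\}$. Coupling this identity with $\alpha(s_k)\le \alpha(t)$ for the $\alpha$-sum and with $u\le M$ for the remainder term produces the displayed bound.

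Letting $n\to\infty$ finishes the argument: the remainder vanishes because $I(t)$ is a fixed finite constant, while the partial sum converges to $\alpha(t)e^{I(t)}$. I foresee no real obstacle; the one point that deserves a moment of attention is that the assumption $\sup_t u(t)<+\infty$ is precisely what forces the remainder to vanish, so the argument closes uniformly whether $T<+\infty$ or $T=+\infty$ (in both cases $I(t)\le\int_0^T\beta(s)\,{\rm d}s<+\infty$ by hypothesis).
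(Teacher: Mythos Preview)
Your iteration argument is correct and complete: substituting the hypothesis into itself, bounding $\alpha(s_k)\le\alpha(t)$ by monotonicity, using the simplex identity $\int_t^T\beta(s_1)\cdots\int_{s_{k-1}}^T\beta(s_k)\,{\rm d}s_k\cdots{\rm d}s_1=I(t)^k/k!$, and controlling the remainder via $\sup u<+\infty$ gives exactly the claimed bound after letting $n\to\infty$. The paper itself omits the proof as standard, so there is nothing to compare against; your write-up is a perfectly good realization of the ``standard proof'' the authors had in mind (the other common route---setting $v(t)=\int_t^T\beta(s)u(s)\,{\rm d}s$ and differentiating $e^{-I(t)}v(t)$---would of course also work).
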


The following Lemma \ref{lemma2} comes from Corollary 2.3 of \citet{Bri03}, which is the
starting point of this paper.

\begin{lemma}\label{lemma2}
  If $(y_t,z_t)_{t\in\T}$ be a solution in $L^p$ of BSDE \eqref{BSDE1}, $c(p)=p/2[(p-1)\wedge 1]$ and $0\leq t\leq T$, then
  $$\begin{array}{lll}
      \Dis |y_t|^p+c(p)\int_t^T |y_s|^{p-2}1_{|y_s|\neq 0}|z_s|^2\ {\rm d}s &\leq & \Dis|\xi|^p +p\int_t^T |y_s|^{p-2}1_{|y_s|\neq 0}\langle y_s,g(s,y_s,z_s)\rangle\ {\rm d}s\\
          & & \Dis \ -p\int_t^T |y_s|^{p-2}1_{|y_s|\neq 0}\langle y_s,z_s{\rm d}B_s\rangle.
  \end{array}$$
\end{lemma}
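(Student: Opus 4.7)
The plan is to regularize: apply It\^o's formula to the $C^2$ approximation $f_\varepsilon(y):=(|y|^2+\varepsilon)^{p/2}$ for $\varepsilon>0$, extract an $\varepsilon$-level inequality of the same shape as the claim, and pass to $\varepsilon\downarrow 0$. The regularization is necessary because for $1<p<2$ the map $y\mapsto|y|^p$ fails to be $C^2$ at the origin, so a direct application of It\^o is unavailable in that form. A routine computation gives $\nabla f_\varepsilon(y)=p(|y|^2+\varepsilon)^{p/2-1}y$ and $D^2 f_\varepsilon(y)=p(|y|^2+\varepsilon)^{p/2-1}I_k+p(p-2)(|y|^2+\varepsilon)^{p/2-2}yy^*$, so It\^o applied to $s\mapsto f_\varepsilon(y_s)$ on $[t,T]$ against the BSDE dynamics yields, after isolating the Hessian term, the identity
\begin{align*}
f_\varepsilon(y_t)+\frac{1}{2}\int_t^T{\rm Tr}\bigl(D^2 f_\varepsilon(y_s)z_sz_s^*\bigr){\rm d}s
&=f_\varepsilon(\xi)+p\int_t^T(|y_s|^2+\varepsilon)^{p/2-1}\langle y_s,g(s,y_s,z_s)\rangle{\rm d}s\\
&\quad-p\int_t^T(|y_s|^2+\varepsilon)^{p/2-1}\langle y_s,z_s{\rm d}B_s\rangle.
\end{align*}

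Next I would extract the constant $c(p)$ via a lower bound on the Hessian contribution. Since $|z^*y|^2\leq|y|^2|z|^2\leq(|y|^2+\varepsilon)|z|^2$, the cross term $\frac{p(p-2)}{2}(|y|^2+\varepsilon)^{p/2-2}|z^*y|^2$ is nonnegative for $p\geq 2$ (and may simply be dropped), while for $1<p<2$ it is negative but controlled in absolute value by $\frac{|p(p-2)|}{2}(|y|^2+\varepsilon)^{p/2-1}|z|^2$. A brief case split produces the lower bound $c(p)(|y|^2+\varepsilon)^{p/2-1}|z|^2$ with $c(p)=(p/2)[(p-1)\wedge 1]$, upgrading the previous identity to the $\varepsilon$-level inequality
\begin{align*}
f_\varepsilon(y_t)+c(p)\int_t^T(|y_s|^2+\varepsilon)^{p/2-1}|z_s|^2{\rm d}s
&\leq f_\varepsilon(\xi)+p\int_t^T(|y_s|^2+\varepsilon)^{p/2-1}\langle y_s,g(s,y_s,z_s)\rangle{\rm d}s\\
&\quad-p\int_t^T(|y_s|^2+\varepsilon)^{p/2-1}\langle y_s,z_s{\rm d}B_s\rangle.
\end{align*}

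It remains to pass $\varepsilon\downarrow 0$. Pointwise, $f_\varepsilon(y_t)\to|y_t|^p$ and $f_\varepsilon(\xi)\to|\xi|^p$. For the dissipation integral, the crucial input is the semimartingale fact---obtained by applying the one-dimensional occupation-times formula componentwise to each $y^i$---that $\int_0^T 1_{\{y_s=0\}}|z_s|^2{\rm d}s=0$ a.s.; on $\{y_s\neq 0\}$ the integrand $(|y_s|^2+\varepsilon)^{p/2-1}|z_s|^2$ converges monotonically (upward for $1<p<2$, downward for $p\geq 2$) to $|y_s|^{p-2}|z_s|^2$, so monotone or dominated convergence yields the desired limit $\int_t^T|y_s|^{p-2}1_{|y_s|\neq 0}|z_s|^2{\rm d}s$. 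In the drift integral, the $\varepsilon$-dependent factor is uniformly bounded by $(\sup_{s\in[0,T]}|y_s|^2+1)^{(p-1)/2}$, which combined with the a.s.\ integrability of $s\mapsto g(s,y_s,z_s)$ (implicit in the definition of an $L^p$ solution, since otherwise \eqref{BSDE1} would not be meaningful pathwise) enables dominated convergence. The martingale term converges in probability via the Burkholder--Davis--Gundy inequality applied to its quadratic variation.

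The principal obstacle is the $\varepsilon\downarrow 0$ passage for the dissipation integral when $1<p<2$: the regularized integrand genuinely blows up on $\{y_s=0\}$, and the estimate survives only because the componentwise occupation-times identity guarantees that $z_s$ vanishes ${\rm d}t\otimes {\rm d}P$-a.e.\ on $\{y_s=0\}$. A subsidiary algebraic obstacle is the Hessian lower bound: the sharp constant $c(p)=(p/2)[(p-1)\wedge 1]$ arises precisely from the interplay between the factor $p(p-2)$ and the sharp Cauchy--Schwarz bound $|z^*y|^2\leq(|y|^2+\varepsilon)|z|^2$, and a coarser estimate would degrade the constant.
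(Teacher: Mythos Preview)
The paper does not prove this lemma at all: it simply states that the result ``comes from Corollary~2.3 of \citet{Bri03}'' and uses it as a black box. Your proposal is precisely the regularization argument that Briand, Delyon, Hu, Pardoux and Stoica carry out in that reference, so in effect you have reproduced the proof the paper elects to cite rather than include. The structure you outline --- apply It\^o to $(|y|^2+\varepsilon)^{p/2}$, case-split on $p\gtrless 2$ to extract the constant $c(p)$ from the Hessian, then pass $\varepsilon\downarrow 0$ using the occupation-times identity to kill the contribution on $\{y_s=0\}$ --- is the standard route and is correct in outline. One small remark: your dominated-convergence bound for the drift term, namely $(|y_s|^2+\varepsilon)^{p/2-1}|y_s|\leq(|y_s|^2+\varepsilon)^{(p-1)/2}$, is exactly what makes that limit go through for $1<p<2$, and the same trick handles the stochastic-integral term; you have identified the right controlling factor.
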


Now we establish the following two propositions important in the proof of our main result. In stating these propositions it will be useful to introduce the following assumption on the generator $g$:
\begin{enumerate}
  \item[\textbf{(A)}] $\as,\RE\ (y,z)\in\R^k\tim\R^{k\tim d}$,
       $$|g(\omega,t,y,z)|\leq\mu(t)\left[\psi^{1\over p}(t,|y|^p)+\varphi_t\right]+\nu(t)
       |z|+f_t,$$
\end{enumerate}
where
$\mu(\cdot),\nu(\cdot):\T\mapsto \R^+$ with $\int_0^T[\mu^{p\over p-1}(t)+\nu^2(t)]\,{\rm d}t<+\infty$, both $\varphi_t$
and $f_t$ are nonnegative, $(\F_t)$-progressively measurable processes with
$\E\left[\int_0^T \varphi_t^p\,{\rm d}t\right]<+\infty$ and
$\E\left[\left(\int_0^T f_t\,{\rm d}t\right)^p\right]<+\infty$, and
$\psi(\cdot,\cdot)\in {\bf S}[T,a(\cdot),b(\cdot)]$.\vspace{0.1cm}

\begin{Remark}\label{remark1}
  If $\psi (\cdot,\cdot)\in {\bf S}[T,a(\cdot),b(\cdot)]$ and $y_t\in \s^p$, then
  $$\E\left[\int_0^T \psi (t,|y_t|^p)\ {\rm d}t\right]\leq \int_0^T a(t)\ {\rm d}t+\int_0^T b(t)\
  {\rm d}t\cdot\E\left[\sup\limits_{t\in\T}|y_t|^p\right]<+\infty.$$
\end{Remark}

\begin{proposition}\label{proposition1}
  Let assumption (A) hold and let $(y_t,z_t)_{t\in\T}$ be a solution in $L^p$ to BSDE \eqref{BSDE1}. Denote $\bar \mu (t)=\int_t^T \mu^{p\over p-1}(s)\ {\rm d}s$ and  $\bar \nu(t)=\int_t^T \nu^2(s)\ {\rm d}s$. Then there exists a constant $m_p>0$ depending only on $p$ such that for each $t\in \T$,
  $$\begin{array}{lll}
      \Dis \E\left[\left(\int_t^T |z_s|^2\ {\rm d}s\right)^{p/2}\right]&\leq &\Dis m_p|\xi|^p+m_pC_t
         \left\{\E\left[\sup\limits_{s\in [t,T]}|y_s|^p\right]+\int_t^T
         \psi(s,\E[|y_s|^p])\ {\rm d}s\right.\\
      &  & \hspace{2.6cm}\Dis +\left.\E\left[\int_t^T \varphi_s^p\ {\rm d}s\right]+\E
           \left[\left(\int_t^T f_s\ {\rm d}s\right)^p\right]\right\},
    \end{array}
  $$
where $C_t:=1+\bar \mu^{p-1}(t)+\bar \mu^{2p-2} (t)+\bar \nu^{p/2} (t)+\bar \nu^{p} (t)$.
\end{proposition}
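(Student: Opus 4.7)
The plan is to apply It\^o's formula to $|y_s|^2$ on $[t,T]$, raise both sides to the power $p/2$, take expectation, and then disentangle the terms using assumption (A) together with H\"older's, Young's and the Burkholder-Davis-Gundy (BDG) inequalities. Specifically, It\^o's formula gives
$$|y_t|^2+\int_t^T|z_s|^2\,{\rm d}s=|\xi|^2+2\int_t^T\langle y_s,g(s,y_s,z_s)\rangle\,{\rm d}s-2\int_t^T\langle y_s,z_s\,{\rm d}B_s\rangle,$$
and dropping $|y_t|^2$ together with $\langle y_s,g\rangle\leq|y_s||g|$ yields an inequality whose $(p/2)$-power, after using $(a+b+c)^{p/2}\leq c_p(a^{p/2}+b^{p/2}+c^{p/2})$ and taking expectation (with a standard localization $\tau_n\uparrow T$ if $p<2$), leaves three terms on the right: $\E[|\xi|^p]$, an $\int|y||g|$ term, and a stochastic-integral term.

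For the stochastic-integral term, BDG gives
$$\E\left[\left|\int_t^T\langle y_s,z_s\,{\rm d}B_s\rangle\right|^{p/2}\right]\leq c_p\,\E\left[\sup_{s\in[t,T]}|y_s|^{p/2}\left(\int_t^T|z_s|^2\,{\rm d}s\right)^{p/4}\right],$$
and a small-parameter Young inequality absorbs $\E[(\int_t^T|z_s|^2\,{\rm d}s)^{p/2}]$ back onto the left at the cost of a constant multiple of $\E[\sup|y|^p]$. For the generator integral, assumption (A) splits $|y_s||g|$ into four pieces. For $\int\mu|y|\psi^{1/p}(s,|y|^p)\,{\rm d}s$, pulling out $\sup_{s\in[t,T]}|y_s|$ and applying H\"older with conjugate exponents $(p/(p-1),p)$ gives $\sup|y|\,\bar\mu^{(p-1)/p}(t)\,(\int_t^T\psi\,{\rm d}s)^{1/p}$; raising to $p/2$ and Young's then yield a $\sup|y|^p$ contribution and a $\bar\mu^{p-1}(t)\int_t^T\psi(s,|y_s|^p)\,{\rm d}s$ contribution. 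The $\mu|y|\varphi$-piece is handled identically with $\varphi^p$ in place of $\psi$. For $\int\nu|y||z|\,{\rm d}s$, Cauchy-Schwarz gives $\sup|y|\,\bar\nu^{1/2}(t)\,(\int|z|^2)^{1/2}$; raising to $p/2$ and another small-parameter Young absorbs the $\E[(\int|z|^2)^{p/2}]$, leaving a $\bar\nu^{p/2}(t)\E[\sup|y|^p]$ contribution. Finally, $\int|y|f\,{\rm d}s\leq\sup|y|\int f\,{\rm d}s$ together with Young gives $\E[\sup|y|^p]+\E[(\int f)^p]$.

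The last step is to bring the expectation inside $\psi$: since $\psi(s,\cdot)$ is concave and nondecreasing with $\psi(s,0)=0$, Jensen's inequality gives
$$\E\left[\int_t^T\psi(s,|y_s|^p)\,{\rm d}s\right]\leq\int_t^T\psi(s,\E[|y_s|^p])\,{\rm d}s.$$
Collecting all constants into $m_p$ and bounding every coefficient on the right by $m_pC_t$ completes the proof. The main obstacle is bookkeeping: choosing the Young parameters so that both the BDG term and the $\nu z$ term absorb $\E[(\int|z|^2)^{p/2}]$ with room to spare, and verifying that every resulting coefficient on $\E[\sup|y|^p]$, $\int\psi(s,\E[|y|^p])\,{\rm d}s$, $\E[\int\varphi^p]$ and $\E[(\int f)^p]$ fits under $m_pC_t$. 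The higher powers $\bar\mu^{2(p-1)}$ and $\bar\nu^p$ appearing in $C_t$ simply accommodate the worst combinations that arise from the repeated Young-type absorption in the mixed terms.
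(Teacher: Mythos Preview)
Your approach is correct and follows essentially the same route as the paper: It\^o on $|y|^2$, raise to the power $p/2$, BDG on the martingale term with a small-parameter Young to absorb $(\int|z|^2)^{p/2}$, H\"older/Young on the four pieces of $|y_s||g|$ coming from assumption (A), and Jensen at the end for the concave $\psi$.

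The only noteworthy ordering difference is that the paper absorbs the $\nu(s)|z_s|$ contribution \emph{before} raising to the power $p/2$, via $2\nu|y||z|\leq 2\nu^2|y|^2+\tfrac12|z|^2$, so that after the $p/2$-power all surviving right-hand terms carry a common coefficient $c_t\sim 1+\bar\mu^{p-1}(t)+\bar\nu^{p/2}(t)$. It is this $c_t$ that multiplies the stochastic integral, and the BDG/Young step then squares it, producing the $\bar\mu^{2p-2}$ and $\bar\nu^{p}$ in $C_t$. In your route the stochastic integral carries only a pure $p$-constant in front, so nothing gets squared; the coefficients you actually generate are of order $1$, $\bar\mu^{p-1}$, $\bar\nu^{p/2}$, which are all dominated by $C_t$. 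So your closing remark that the higher powers ``accommodate the worst combinations from repeated Young absorption'' is not quite how they arise in the paper, and in fact your own argument does not need them---but since the stated $C_t$ majorises your coefficients, the proposition as written follows either way.
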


\begin{proof}
Applying It\^{o}'s formula to $|y_t|^2$ yields
$$
|y_t|^2+\int_t^T |z_s|^2\ {\rm d}s=|\xi|^2+2\int_t^T \langle
y_s,g(s,y_s,z_s)\rangle \ {\rm d}s-2\int_t^T\langle y_s,z_s{\rm
d}B_s\rangle. \vspace{-0.2cm}$$ It follows from assumption (A)
that for each $s\in [t,T]$,
$$
\begin{array}{lll}
2\langle y_s,g(s,y_s,z_s)\rangle
& \leq & 2\left(\sup\limits_{s\in [t,T]}|y_s|\right)\left(\mu
(s)\left[\psi^{1\over p}(s,|y_s|^p)+\varphi_s\right]+f_s\right)\\
&& \Dis +2 \left(\sup\limits_{s\in [t,T]}|y_s|^2\right)\cdot
\nu^2(s)+{|z_s|^2\over 2}.
\end{array}
$$
Thus, in view of the inequality $2ab\leq a^2+b^2$, we get that
$$
\begin{array}{lll}
\Dis {1\over 2}\int_t^T |z_s|^2\ {\rm d}s &\leq & \Dis
|\xi|^2+\left(2+2\bar\nu (t)\right)\cdot\left(\sup\limits_{s\in
[t,T]}|y_s|^2\right) +\left[\int_t^T f_s\ {\rm d}s\right]^2\\
&& \Dis \ \ \ \ \ +\left\{\int_t^T \mu
(s)\left[\psi^{1\over p}(s,|y_s|^p)+\varphi_s\right]\ {\rm
d}s\right\}^2+2\left|\int_t^T\langle y_s,z_s{\rm
d}B_s\rangle\right|.
\end{array}
$$
It follows from H\"{o}lder's inequality and the inequality $(a+b)^{p}\leq 2^p(a^{p}+b^{p})$ that
$$\hspace{-0.2cm}
\begin{array}{lll}
\Dis \left[\int_t^T \mu (s)\left[\psi^{1\over p}(s,|y_s|^p)+\varphi_s\right]\,{\rm d}s\right]^p &
\leq & \Dis
\left[\int_t^T \mu^{\frac{p}{p-1}}(s)\ {\rm d}s\right]^{p-1}\cdot
\int_t^T \left[\psi^{1\over p}(s,|y_s|^p)+\varphi_s\right]^p{\rm d}s\\
&\leq & \Dis  \bar\mu^{p-1}(t)\cdot 2^p\int_t^T
\left[\psi(s,|y_s|^p)+\varphi_s^p\right]\ {\rm d}s.
\end{array}
$$
Then there exists a constant $a_p>0$ depending only on $p$ such that
\begin{equation}\label{equation2}
\begin{array}{lll}
\hspace*{-0.2cm}\Dis \left[\int_t^T |z_s|^2\ {\rm d}s\right]^{p/2}&\leq & \Dis a_p|\xi|^p+c_t\left\{\sup\limits_{s\in
[t,T]}|y_s|^p+\int_t^T \psi(s,|y_s|^p)\ {\rm
d}s+\int_t^T \varphi_s^p\ {\rm d}s\right.\\
&& \Dis \hspace*{2.0cm}+\left.\left[\int_t^T
f_s\ {\rm d}s\right]^p+\left|\int_t^T\langle y_s,z_s{\rm
d}B_s\rangle\right|^{p/2}\right\},
\end{array}
\end{equation}
where $ c_t:=a_p\left(1+\bar \mu^{p-1} (t)+\bar \nu^{p/2}(t)\right)$.\vspace{0.2cm}

Furthermore, the Burkholder--Davis--Gundy (BDG) inequality implies that there exists a constant $d_p>0$ depending only on $p$ such that for each $t\in\T$,
$$
\begin{array}{lll}
\Dis c_t\E\left[\left|\int_t^T\langle y_s,z_s{\rm
d}B_s\rangle\right| ^{p/2} \right]&\leq & \Dis c_td_{p}\E\left[\sup\limits_{s\in
[t,T]}|y_s|^{p/2}\cdot\left( \int_t^T|z_s|^2\ {\rm
d}s\right)^{p/4}\right]\\
&\leq & \Dis {c_t^2d_{p}^2\over
2}\E\left[\sup\limits_{s\in [t,T]}|y_s|^p\right]+{1\over
2}\E\left[ \left(\int_t^T|z_s|^2\ {\rm d}s\right)^{p/2}\right].
\end{array}
$$
Returning to the estimate \eqref{equation2}, we get that for each $t\in \T$,
$$\begin{array}{lll}
\Dis \E\left[\left(\int_t^T |z_s|^2\ {\rm
d}s\right)^{p/2}\right]&\leq &\Dis 2a_p|\xi|^p+(2c_t+c_t^2d_p^2)
\left\{\E\left[\sup\limits_{s\in [t,T]}|y_s|^p\right]+\E\left[\int_t^T \varphi_s^p\ {\rm d}s\right]\right.\\
&& \Dis \hspace*{1.3cm}+\left.\E\left[\int_t^T
\psi(s,|y_s|^p)\ {\rm d}s\right]+\E\left[\left(\int_t^T f_s\ {\rm
d}s\right)^p\right]\right\}.
\end{array}
$$
Then it follows from the definition of the function $c_t$ that there exists a constant $b_p>0$ depending only on $p$ such that
$$2c_t+c_t^2d_p^2\leq b_p \left(1+\bar \mu^{p-1} (t)+\bar \mu^{2p-2}
(t)+\bar \nu^{p/2} (t)+\bar \nu^{p} (t)\right).$$
Thus, by taking $m_p=2a_p+b_p$, in view of the fact that $\psi(s,\cdot)$ is a concave function for each $s\in\T$, the conclusion of Proposition \ref{proposition1} follows from Fubini's theorem and Jensen's inequality, completing the proof.
\end{proof}

\begin{proposition}\label{proposition2}
Let assumption (A) hold and let $(y_t,z_t)_{t\in\T}$ be a solution in $L^p$ to BSDE \eqref{BSDE1}. Denote $\bar \mu (t)=\int_t^T \mu^{p\over p-1}(s)\ {\rm d}s$ and $\bar \nu(t)=\int_t^T \nu^2(s)\ {\rm d}s$. Then there exists a constant $k_p>0$ depending
only on $p$ with $K_t:=e^{k_{p}(\bar\mu (t)+\bar\nu(t))}$ such that for each $t\in \T$,
$$
\begin{array}{lll}
\Dis \E\left[\sup\limits_{s\in [t,T]}|y_s|^{p}\right] &\leq &\Dis
K_t\left\{k_p\E[|\xi|^p]+k_p \E\left[\left(\int_t^T f_s\ {\rm
d}s\right)^{p}\right]\right.\\
&& \Dis \hspace{1.0cm}+\left.{1\over 2}\E\left[\int_t^T
\varphi_s^p\ {\rm d}s\right]+{1\over 2}\int_t^T
\psi(s,\E[|y_s|^p])\ {\rm d}s \right\}.
\end{array}$$
\end{proposition}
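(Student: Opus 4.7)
The plan is to start from Lemma~\ref{lemma2}, use assumption (A) together with carefully weighted Young inequalities to control the drift, handle the stochastic integral via the Burkholder--Davis--Gundy (BDG) inequality after taking a supremum in time, and close via the backward Gronwall inequality (Lemma~\ref{lemma1}).

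First I would bound the drift integrand by $|y_s|^{p-2}\mathbf{1}_{|y_s|\neq 0}\langle y_s, g(s,y_s,z_s)\rangle\leq |y_s|^{p-1}|g(s,y_s,z_s)|$ and insert (A). The four resulting pieces are split by weighted Young inequalities. For $\mu|y|^{p-1}\psi^{1/p}$ and $\mu|y|^{p-1}\varphi_s$ the conjugate exponents $p/(p-1)$ and $p$ yield bounds of the form $C\mu^{p/(p-1)}|y|^p+\frac{1}{2p}\psi(s,|y|^p)$ and $C\mu^{p/(p-1)}|y|^p+\frac{1}{2p}\varphi_s^p$, the weight $1/(2p)$ being chosen so that, after the prefactor $p$ in Lemma~\ref{lemma2}, the final coefficients are exactly $\frac{1}{2}$ as in the statement. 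For $\nu|y|^{p-1}|z|=(\nu|y|^{p/2})\cdot(|y|^{(p-2)/2}\mathbf{1}_{|y|\neq 0}|z|)$, Young yields $C\nu^2|y|^p+\delta\,|y|^{p-2}\mathbf{1}_{|y|\neq 0}|z|^2$ with $\delta$ small enough that the $|z|^2$ piece is absorbed into the $c(p)\int|y|^{p-2}|z|^2\,ds$ term on the left of Lemma~\ref{lemma2}. Finally, $\int_t^T |y_s|^{p-1}f_s\,ds\leq \bigl(\sup_{u\in[t,T]}|y_u|\bigr)^{p-1}\int_t^T f_s\,ds$ followed by Young with exponents $p/(p-1)$ and $p$ produces a small multiple of $\sup_s|y_s|^p$ plus a constant times $\bigl(\int_t^T f_s\,ds\bigr)^p$.

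Next I would take $\sup_{\tau\in[t,T]}$ of both sides of Lemma~\ref{lemma2} and then expectation. The stochastic integral is estimated by BDG:
\[
p\,\E\Bigl[\sup_{\tau\in[t,T]}\Bigl|\int_\tau^T |y_s|^{p-2}\mathbf{1}_{|y_s|\neq 0}\langle y_s, z_s\,dB_s\rangle\Bigr|\Bigr]\leq C\,\E\Bigl[\Bigl(\sup_{s\in[t,T]}|y_s|^p\cdot\int_t^T |y_s|^{p-2}\mathbf{1}_{|y_s|\neq 0}|z_s|^2\,ds\Bigr)^{1/2}\Bigr],
\]
and AM--GM then splits this as $\epsilon\,\E[\sup_s|y_s|^p]+C_\epsilon\,\E\bigl[\int_t^T |y_s|^{p-2}\mathbf{1}_{|y_s|\neq 0}|z_s|^2\,ds\bigr]$; the second piece is again controlled, by taking expectation directly in Lemma~\ref{lemma2}, in terms of $\E[|\xi|^p]$ and the same drift estimates. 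Choosing all small weights simultaneously so that every $\sup|y|^p$ contribution (from the BDG bound and from the $f$-term) is absorbed on the left and every $|y|^{p-2}|z|^2$ contribution into $c(p)\int|y|^{p-2}|z|^2\,ds$, and applying Jensen's inequality $\E[\psi(s,|y_s|^p)]\leq \psi(s,\E[|y_s|^p])$ from the concavity of $\psi(s,\cdot)$, I arrive at
\[
u(t)\leq A(t)+\int_t^T \beta(s)\,u(s)\,ds,
\]
where $u(t):=\E[\sup_{s\in[t,T]}|y_s|^p]$, $\beta(s)=k_p\bigl(\mu^{p/(p-1)}(s)+\nu^2(s)\bigr)$, and $A(t)$ is precisely the expression in braces on the right of the statement.

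Since $A(\cdot)$ is decreasing in $t$, $\int_0^T\beta<\infty$ by hypothesis on $\mu,\nu$, and $u$ is continuous and bounded because $y\in\mathcal{S}^p(0,T;\R^k)$, Lemma~\ref{lemma1} yields $u(t)\leq A(t)\exp\bigl(\int_t^T\beta(s)\,ds\bigr)=K_t A(t)$, which is the claim. The main obstacle is the bookkeeping of constants: the Young weights for $\nu|y|^{p-1}|z|$, for the BDG--AM--GM split, and for the $f$-term must be chosen simultaneously so that (i) all $|y|^{p-2}|z|^2$ contributions are absorbed by the $c(p)$-term, (ii) all $\sup|y|^p$ contributions are absorbed on the left, and (iii) the coefficients of $\E[\int\varphi_s^p\,ds]$ and $\int\psi(s,\E[|y_s|^p])\,ds$ come out to exactly $\frac{1}{2}$. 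The circular feature---BDG produces an $\E[\int|y|^{p-2}|z|^2\,ds]$ remainder which is itself only controlled through Lemma~\ref{lemma2}---must be traced through carefully.
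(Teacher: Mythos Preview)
Your proposal is correct and follows essentially the same route as the paper's proof: Lemma~\ref{lemma2}, Young splittings of the drift under (A), absorption of the $|y|^{p-2}|z|^2$ piece into the $c(p)$-term, BDG for the martingale, re-use of the expectation of Lemma~\ref{lemma2} to control the BDG remainder, then Jensen and the backward Gronwall inequality. The only refinement worth noting is that, rather than fixing the Young weight for the $\psi$/$\varphi$ terms to $1/(2p)$ at the outset, the paper leaves a free parameter $\delta$ there, first determines the overall absorption constant $k'_p$ (which does \emph{not} depend on $\delta$), and only then sets $\delta=2^{p+2}k'_p$ so that the final coefficients of $\E\bigl[\int\varphi_s^p\,{\rm d}s\bigr]$ and $\int\psi(s,\E[|y_s|^p])\,{\rm d}s$ come out exactly $\tfrac{1}{2}$---precisely the bookkeeping issue your closing paragraph anticipates.
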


\begin{proof}
Assumption (A) yields that $\langle y_s,g(s,y_s,z_s)\rangle\leq |y_s|\{\mu(s)[\psi^{{1\over p}}(s,|y_s|^p)+\varphi_s]+\nu(s)|z_s|+f_s\}$, from which and Lemma \ref{lemma2} we deduce that, with probability one, for each $t\in \T$,
$$\begin{array}{l}
\Dis |y_t|^p+c(p)\int_t^T |y_s|^{p-2}1_{|y_s|\neq 0}|z_s|^2\
{\rm d}s \leq \Dis|\xi|^p -p\int_t^T |y_s|^{p-2}1_{|y_s|\neq
0}\langle y_s,z_s{\rm d}B_s\rangle\\
\Dis\hspace*{3.7cm} +p\int_t^T |y_s|^{p-1}\left\{\mu(s)\left[\psi^{1\over p}(s,|y_s|^p)+
\varphi_s\right]+\nu(s)|z_s|+f_s\right\}\ {\rm d}s.
\end{array}$$
From Young's inequality ($a^rb^{1-r}\leq ra+(1-r)b$ for each $a\geq 0$, $b\geq 0$ and $0<r<1$) and the inequality $(a+b)^p\leq 2^p(a^p+b^p)$ it follows that
$$\begin{array}{lll}
&&\Dis p\int_t^T |y_s|^{p-1}\mu(s)\left(\psi^{1\over p}(s,|y_s|^p)+\varphi_s\right)\ {\rm d}s\\
&\leq & \Dis  (p-1)\delta^{\frac{1}{p-1}}\int_t^T
|y_s|^{p}\mu^{\frac{p}{p-1}}(s)\ {\rm d}s+{2^p\over \delta} \int_t^T
\left(\psi(s,|y_s|^p)+\varphi_s^p\right)\ {\rm d}s,
\end{array}$$
where $\delta>0$ will be chosen later. Thus, by assumption (A) and Remark \ref{remark1} we deduce first from the previous two inequalities that,
$\int_0^T |y_s|^{p-2}1_{|y_s|\neq 0}|z_s|^2\ {\rm d}s<+\infty,\ps$. Moreover, from the inequality $ab\leq(a^2+b^2)/2 $ we get that
$$
\Dis p\nu (s) |y_s|^{p-1}|z_s|\leq \Dis {p\nu^2(s)\over 1\wedge (p-1)}|y_s|^p +{c(p)\over
2}|y_s|^{p-2}1_{|y_s|\neq 0}|z_s|^2.
$$
Then for each $t\in \T$, we have
\begin{equation}\label{equation3}
\Dis |y_t|^p+{c(p)\over 2}\int_t^T |y_s|^{p-2}1_{|y_s|\neq
0}|z_s|^2\ {\rm d}s \leq  \Dis X_t-p\int_t^T
|y_s|^{p-2}1_{|y_s|\neq 0}\langle y_s,z_s{\rm d}B_s\rangle,
\end{equation}
where $$X_t:=|\xi|^p +d_{p,\delta} \int_t^T \left(\mu^{p\over p-1}(s)+\nu^2(s)\right)|y_s|^{p}\ {\rm d}s+
 {2^p\over \delta} \int_t^T (\psi(s,|y_s|^p)+\varphi_s^p)\ {\rm d}s+
 p\int_t^T |y_s|^{p-1}f_s\ {\rm d}s
$$
with $d_{p,\delta}=(p-1)\delta^{1/(p-1)} +{p/[1\wedge
(p-1)]}>0$.\vspace{0.2cm}

It follows from the BDG inequality that $\{M_t:=\int_0^t
|y_s|^{p-2}1_{|y_s|\neq 0}\langle y_s,z_s{\rm
d}B_s\rangle\}_{t\in\T}$ is a uniformly integrable martingale. In
fact, by Young's inequality we have
$$
\begin{array}{lll}
\Dis \E\left[\langle M, M \rangle^{1/2}_T\right]&\leq &
\Dis\E\left[\sup\limits_{s\in [0,T]}|y_s|^{p-1}\cdot\left(
\int_0^T|z_s|^2\ {\rm
d}s\right)^{1/2}\right]\\
&\leq &\Dis {(p-1)\over p}\E\left[\sup\limits_{s\in
[0,T]}|y_s|^p\right]+{1\over p}\E\left[\left( \int_0^T|z_s|^2\
{\rm d}s\right)^{p/2}\right]<+\infty.
\end{array}
$$
Returning to inequality \eqref{equation3} and taking the expectation, we get both
\begin{equation}\label{equation4}
{c(p)\over 2}\E\left[\int_t^T |y_s|^{p-2}1_{|y_s|\neq 0}|z_s|^2\
{\rm d}s\right] \leq \E[X_t]
\end{equation}
and
\begin{equation}\label{equation5}
\E\left[\sup\limits_{s\in [t,T]}|y_s|^{p}\right]\leq \E[X_t]+\bar
k_p\E\left[\left(\langle M, M \rangle_T-\langle M, M
\rangle_t\right) ^{1/2}\right],
\end{equation}
where the $\bar k_p>0$ only depends on $p$. The last step uses the BDG inequality.\vspace{0.2cm}

On the other hand, Young's inequality implies that
$$
\begin{array}{lll}
&&\Dis \bar k_p\E\left[\left(\langle M, M \rangle_T-\langle M, M
\rangle_t\right) ^{1/2}\right]\\
&\leq & \Dis \bar k_p\E\left[\sup\limits_{s\in
[t,T]}|y_s|^{p/2}\cdot\left( \int_t^T|y_s|^{p-2}1_{|y_s|\neq
0}|z_s|^2\ {\rm
d}s\right)^{1/2}\right]\\
&\leq &\Dis {1\over 2}\E\left[\sup\limits_{s\in
[t,T]}|y_s|^p\right]+{\bar k_p^2\over 2}\E\left[
\int_t^T|y_s|^{p-2}1_{|y_s|\neq 0}|z_s|^2\ {\rm d}s\right].
\end{array}
$$
We may now combine inequalities \eqref{equation4} and \eqref{equation5} to obtain the existence of a constant $k'_p>0$ such that $\E\left[\sup_{s\in [t,T]}|y_s|^{p}\right]\leq k'_p\E[X_t]$. Another application of Young's inequality yields the existence of a constant $k''_p>0$ depending only on $p$ such that
$$\begin{array}{lll}
\Dis pk'_p\E\left[\int_t^T |y_s|^{p-1}f_s\ {\rm d}s \right]&\leq
&\Dis pk'_p\E\left[\sup\limits_{s\in
[t,T]}|y_s|^{p-1}\int_t^T f_s\ {\rm d}s \right]\\
&\leq & \Dis  {1\over 2}\E\left[\sup\limits_{s\in
[t,T]}|y_s|^{p}\right]+{k''_p\over 2}\E\left[\left(\int_t^T f_s\
{\rm d}s\right)^{p}\right].
\end{array}$$
Thus, using the definition of $X_t$ we can deduce that
$$\begin{array}{lll}
\Dis \E\left[\sup\limits_{s\in [t,T]}|y_s|^{p}\right] &\leq & \Dis
2k'_p\E\left[|\xi|^p +d_{p,\delta} \int_t^T \left(\mu^{p\over
p-1}(s)+\nu^2(s)\right)|y_s|^{p}\ {\rm d}s\right]\\
 && \Dis +2k'_p\E\left[
 {2^p\over \delta}\int_t^T \left(\psi(s,|y_s|^p)+\varphi_s^p\right)
 \ {\rm d}s\right]+k''_p\E\left[\left(\int_t^T f_s\
{\rm d}s\right)^{p}\right].
\end{array}$$
By letting $\delta=2^{p+2}k'_p$ and $h_t=\E\left[\sup_{s\in [t,T]}|y_s|^{p}\right]$
in the previous inequality and using Fubini's theorem and Jensen's inequality, noticing that $\psi(s,\cdot)$ is a concave function for each $s\in \T$, we know that for each $t\in \T$,
$$\begin{array}{lll}
\Dis h_t &\leq & \Dis  2k'_p\E[|\xi|^p]
 +k''_p\E\left[\left(\int_t^T f_s\
{\rm d}s\right)^{p}\right]+{1\over 2}\E\left[\int_t^T \varphi_s^p\
{\rm d}s\right]\\
&& \Dis +{1\over 2}\int_t^T \psi\left(s,\E[|y_s|^p]\right)\ {\rm
d}s+2k'_pd_{p,\delta}\int_t^T \left(\mu^{p\over p-1}(s)+\nu^2(s)\right)h_s\
{\rm d}s.
\end{array}
$$
Finally, in view of assumption (A), the Backward Gronwall inequality (Lemma \ref{lemma1}) yields that for each $t\in \T$,
$$\begin{array}{lll}
\Dis h_t&\leq & \Dis e^{2k'_pd_{p,\delta}(\bar \mu(t)+\bar\nu
(t))}\left\{2k'_p\E[|\xi|^p]
 +k''_p\E\left[\left(\int_t^T f_s\
{\rm d}s\right)^{p}\right]\right.\\
&& \Dis \hspace{3.0cm}+\left.{1\over 2}\E\left[\int_t^T
\varphi_s^p\ {\rm d}s\right]+
 {1\over 2}\int_t^T \psi(s,\E[|y_s|^p])\ {\rm d}s\right\}.
 \end{array}
$$
The proof of Proposition \ref{proposition2} is thus completed.\vspace{-0.4cm}
\end{proof}

\section{Main Result and Its Proof}\label{sectionmainresult}\setcounter{equation}{0}

In this section, we will put forward and prove our main result. Let us first introduce the following assumptions, where we assume that $0<T\leq +\infty$:
\begin{enumerate}
\item[(H4)]
$\as,\ \RE y_1,y_2\in \R^k,z_1,z_2\in\R^{k\times d}$,
$$|g(\omega,t,y_1,z_1)-g(\omega,t,y_2,z_2)|\leq\alpha(t)\rho^{1\over p}(t,|y_1-y_2|^p)+\beta(t)|z_1-z_2|,$$
\end{enumerate}
where $\alpha(\cdot)$, $\beta(\cdot):\T\mapsto \R^+$ satisfy the condition $\int_0^T\left(\alpha^{p\over p-1}(t)+\beta^2(t)\right)\ {\rm d}t<+\infty$ and the function $\rho(\cdot,\cdot)$ belongs to ${\bf S}[T,a(\cdot),b(\cdot)]$;
\begin{enumerate}
  \item[(H5)] $\Dis \E\left[\left(\int_0^T |g(t,0,0)|\ {\rm d}t\right)^p\right]<+\infty$.
\end{enumerate}

\begin{Remark}\label{remark2}
It follows from Young's inequality, the inequality $(a+b)^p\leq 2^p(a^p+b^p)$, and assumption (H4) that
  $$\Dis\int_0^T\left[\alpha(t)\left(a^{1\over p}(t)+b^{1\over p}(t)\right)\right]{\rm d}t
  \leq \int_0^T \left[{p-1\over p}\alpha^{p\over p-1}(t)+{2^p\over p}(a(t)+b(t))\right]{\rm
  d}t<+\infty.$$
Furthermore, H\"{o}lder's inequality yields that $g(\cdot,0,0)\in {\rm M}^p(0,T;\R^{k})$ implies (H5) in the case where $T<+\infty$.
\end{Remark}

The following Theorem \ref{theoremmainresult} is the main result of this paper.

\begin{theorem}\label{theoremmainresult}
  Let $0<T\leq +\infty$ and $g$ satisfy (H4) and (H5). Then, for each $\xi\in\Lp$, the BSDE with parameters $(\xi,T,g)$ has a unique solution in $L^p$.
\end{theorem}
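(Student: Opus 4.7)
The plan is to prove uniqueness and existence separately, in both cases using the a priori estimates of Propositions \ref{proposition1} and \ref{proposition2} combined with a Bihari-type comparison that exploits the ODE-uniqueness property built into the class $\mathbf{S}[T,a(\cdot),b(\cdot)]$. The structural reason the argument closes is that Proposition \ref{proposition2} delivers a bound whose right-hand side contains exactly an integral $\int_t^T \psi(s,\E[|y_s|^p])\,{\rm d}s$, matching the Osgood-type hypothesis on $\rho$.

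For uniqueness, take two $L^p$-solutions $(y^i,z^i)$, $i=1,2$, and write $\bar y = y^1-y^2$, $\bar z=z^1-z^2$. The pair $(\bar y,\bar z)$ solves a BSDE with zero terminal value and generator $\bar g(s,y,z):=g(s,y+y^2_s,z+z^2_s)-g(s,y^2_s,z^2_s)$, which by (H4) satisfies (A) with $\mu=\alpha$, $\nu=\beta$, $\psi=\rho$, $\varphi=f=0$. Proposition \ref{proposition2} then gives
$$\E\bigl[\sup_{s\in[t,T]}|\bar y_s|^p\bigr]\le \frac{K_t}{2}\int_t^T \rho\bigl(s,\E[|\bar y_s|^p]\bigr)\,{\rm d}s.$$
Setting $h(t):=\E[\sup_{s\in[t,T]}|\bar y_s|^p]$ (finite because $\bar y\in\mathcal{S}^p$), monotonicity of $\rho(s,\cdot)$ and boundedness of $K_t$ give $h(t)\le C\int_t^T \rho(s,h(s))\,{\rm d}s$. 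A Bihari-type comparison with the unique solution $u\equiv 0$ of $u'=-C\rho(t,u)$, $u(T)=0$---whose uniqueness is precisely the defining hypothesis of $\mathbf{S}[T,a,b]$ up to absorbing the constant $C$---forces $h\equiv 0$, so $y^1=y^2$ and then $\bar z=0$ in ${\rm M}^p$.

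For existence I would set $(y^0,z^0)\equiv(0,0)$ and, for $n\ge 1$, take $(y^n,z^n)$ to be the unique $L^p$-solution of
$$y^n_t=\xi+\int_t^T g(s,y^{n-1}_s,z^n_s)\,{\rm d}s-\int_t^T z^n_s\,{\rm d}B_s,$$
whose existence is classical because the driver $(s,z)\mapsto g(s,y^{n-1}_s,z)$ is merely $\beta(s)$-Lipschitz in $z$ (the $L^p$-extension of case (H3) of \citet{Chen00} via the techniques of \citet{Bri03}). Propositions \ref{proposition1} and \ref{proposition2} together with $\rho(s,u)\le a(s)+b(s)u$ give uniform $\mathcal{S}^p\times{\rm M}^p$-bounds on $(y^n,z^n)$, possibly after splitting $\T$ into finitely many subintervals so that each induced contraction constant is less than one. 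For convergence, $\hat y^n=y^{n+1}-y^n$ and $\hat z^n=z^{n+1}-z^n$ satisfy a BSDE whose generator $\hat g(s,z):=g(s,y^n_s,z^n_s+z)-g(s,y^{n-1}_s,z^n_s)$ is $\beta(s)$-Lipschitz in $z$ and satisfies $|\hat g(s,0)|\le\alpha(s)\rho^{1/p}(s,|\hat y^{n-1}_s|^p)$; Proposition \ref{proposition2}, combined with H\"{o}lder's and Jensen's inequalities (using concavity of $\rho(s,\cdot)$), then yields
$$h^n(t):=\E\bigl[\sup_{s\in[t,T]}|\hat y^n_s|^p\bigr]\le C_0\int_t^T \rho\bigl(s,h^{n-1}(s)\bigr)\,{\rm d}s.$$

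The decisive step, and the main obstacle, is extracting $h^n\to 0$ from this nonlinear recursion. The intended route is to set $\eta(t):=\limsup_n h^n(t)$, use the uniform bound on $(h^n)$ and the integrable dominator $a(s)+b(s)\sup_n h^n(s)$ together with continuity of $\rho(s,\cdot)$ to pass $\limsup$ through the integral by reverse Fatou, obtaining $\eta(t)\le C_0\int_t^T \rho(s,\eta(s))\,{\rm d}s$; the same Bihari comparison as in the uniqueness argument then forces $\eta\equiv 0$. Hence $(y^n)$ is Cauchy in $\mathcal{S}^p$, and Proposition \ref{proposition1} applied to the BSDE solved by $(\hat y^n,\hat z^n)$ shows $(z^n)$ is Cauchy in ${\rm M}^p$. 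A routine dominated-convergence passage identifies the limit as a solution of BSDE \eqref{BSDE1}. The infinite-horizon case runs identically, the only new point being that $K_t$, $\bar\mu(t)$, and $\bar\nu(t)$ remain bounded on $[0,\infty)$, which follows from the standing integrability conditions in (H4).
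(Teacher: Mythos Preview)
Your overall architecture matches the paper's---Picard iteration with $y^{n-1}$ frozen, each step solved via the $L^p$ version of (H3) (Proposition \ref{proposition3}), a priori bounds from Proposition \ref{proposition2}, and an Osgood-type comparison exploiting the ODE-uniqueness built into $\mathbf{S}[T,a,b]$---but two points need repair. The minor one: your claim that the prefactor $C$ (resp.\ $C_0$) can simply be ``absorbed'' into $\rho$ is not justified, since the defining property of $\mathbf{S}[T,a,b]$ is uniqueness for $u'=-\rho(t,u)$, not for $u'=-C\rho(t,u)$, and closure of this class under scalar multiplication is not stated anywhere. The paper sidesteps this by subdividing $[0,T]$ into finitely many pieces on which $\hat\alpha+\hat\beta$ is small enough that the exponential factor $K_t$ in Proposition \ref{proposition2} is at most $2$, so that on each piece the integral inequality holds with constant $1$; the ODE comparison then applies verbatim and one glues. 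You invoke subdivision only for the uniform bounds, not for the comparison step where it is actually needed.

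The more serious gap is the Cauchy conclusion. Your recursion controls only consecutive differences $h^n(t)=\E\bigl[\sup_{s\in[t,T]}|y^{n+1}_s-y^n_s|^p\bigr]$, and $h^n\to 0$ does \emph{not} make $(y^n)$ Cauchy in $\mathcal{S}^p$ (consecutive differences tending to zero is strictly weaker than Cauchy). The paper works instead with $\E\bigl[\sup_{s\in[t,T]}|y^{n+m}_s-y^n_s|^p\bigr]$ uniformly in $m\geq 1$ (Lemma \ref{lemma3}), and bounds these by the deterministic iterates $\varphi_{n+1}(t)=\int_t^T\rho(s,\varphi_n(s))\,{\rm d}s$ with $\varphi_0(t)=\int_t^T\rho(s,M)\,{\rm d}s$; monotonicity gives $\varphi_n\downarrow\varphi$ with $\varphi(t)=\int_t^T\rho(s,\varphi(s))\,{\rm d}s$, hence $\varphi\equiv 0$ by the $\mathbf{S}$-hypothesis, delivering the Cauchy property uniformly in $m$. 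Your $\limsup$/reverse-Fatou device is a legitimate alternative to this $\varphi_n$-recursion, but it must be applied to $\sup_{m\geq 1}\E\bigl[\sup_{s}|y^{n+m}_s-y^n_s|^p\bigr]$, not to $h^n$.
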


In order to prove Theorem \ref{theoremmainresult}, we need first to establish the following Proposition \ref{proposition3}, which is just Theorem 1.2 of \citet{Chen00} when $p=2$.

\begin{proposition}\label{proposition3}
  Let $0<T\leq +\infty$ and $g$ satisfy (H3) and (H5). Then, for each $\xi\in\Lp$, the BSDE with parameters $(\xi,T,g)$ has a unique solution in $L^p$.
\end{proposition}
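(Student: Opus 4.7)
My plan is to prove Proposition \ref{proposition3} by a local contraction-mapping argument in $\s^p\times\mathrm{M}^p$, pasted together across a finite partition of $\T$. The key observation is that Propositions \ref{proposition1} and \ref{proposition2} apply to each Picard step with clean constants: given $(y,z)\in\s^p\times\mathrm{M}^p$, the generator $s\mapsto g(s,y_s,z_s)$ does not depend on the unknowns of the linear BSDE it drives, so assumption (A) holds trivially with $\mu\equiv\nu\equiv 0$ and $f_s=|g(s,y_s,z_s)|$; consequently $C_t\equiv 1$ and $K_t\equiv 1$ in those propositions. This side-steps the mismatch between (H3), which only requires $\int_0^T u(t)\,\mathrm{d}t<+\infty$, and assumption (A), which would demand the stronger $\int_0^T u^{p/(p-1)}(t)\,\mathrm{d}t<+\infty$.

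\textbf{Picard map and local contraction.} I define $\Phi(y,z)=(Y,Z)$ to be the $L^p$-solution of the linear BSDE with terminal $\xi$ and driver $g(s,y_s,z_s)$. Existence of $(Y,Z)$ comes from the martingale representation theorem applied to the $L^p$-martingale $M_t=\E[\xi+\int_0^T g(s,y_s,z_s)\,\mathrm{d}s\mid \F_t]$, whose integrability is ensured by (H5), H\"older's inequality, the integrability of $u$ and $v^2$, and the $\s^p\times\mathrm{M}^p$-regularity of $(y,z)$. For two inputs $(y^i,z^i)$ with differences $(\delta y,\delta z)$, the images' difference $(\delta Y,\delta Z)$ solves a BSDE with zero terminal and driver bounded in absolute value by $f_s:=u(s)|\delta y_s|+v(s)|\delta z_s|$. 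Applying Propositions \ref{proposition2} and \ref{proposition1} on a subinterval $[\tau,T]$, together with H\"older's inequality for each of the two summands of $\int_\tau^T f_s\,\mathrm{d}s$, yields a joint contraction estimate whose coefficient is controlled by $(A(T)-A(\tau))^p$ and $(N(T)-N(\tau))^{p/2}$, where $A(t)=\int_0^t u(s)\,\mathrm{d}s$ and $N(t)=\int_0^t v^2(s)\,\mathrm{d}s$. Since $A$ and $N$ are continuous and finite on $\overline{\T}$, I partition $\T$ into finitely many subintervals on each of which this coefficient falls below $1/2$, giving a contraction of $\Phi$ in the corresponding product space.

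\textbf{Pasting, uniqueness and infinite horizon.} Banach's fixed-point theorem then produces a unique $L^p$-solution on the rightmost piece $[t_{N-1},T]$; using its value at $t_{N-1}$ as a new terminal condition, I iterate the procedure backward until all of $\T$ is covered. Uniqueness on each piece, and hence globally, follows from the same contraction applied with both $(y^i,z^i)$ taken to be solutions. For the infinite-horizon case $T=+\infty$, the integrability in (H3) yields $A(+\infty)+N(+\infty)<+\infty$, so a finite partition of $[0,+\infty)$ fitting the contraction criterion still exists, with the terminal condition interpreted as $y_t\To\xi$ in $L^p$ as $t\to +\infty$.

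\textbf{Main obstacle.} The principal technical task is combining the $\s^p$- and $\mathrm{M}^p$-estimates from Propositions \ref{proposition1} and \ref{proposition2} into a single clean contraction on $\s^p\times\mathrm{M}^p$: each piece of the estimate mixes an $(A(T)-A(\tau))^p$-factor on $\delta y$ with an $(N(T)-N(\tau))^{p/2}$-factor on $\delta z$, and both must be made simultaneously small by one choice of partition. In the infinite-horizon setting the additional subtlety is the interpretation of the ``terminal condition'' and the treatment of the tail, both of which hinge on the tail-smallness of $A$ and $N$.
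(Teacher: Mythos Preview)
Your proposal is correct and follows essentially the same route as the paper: define the Picard map via martingale representation, use Propositions~\ref{proposition1}--\ref{proposition2} to obtain a local contraction on $\s^p\times\mathrm{M}^p$, partition $\T$ into finitely many pieces on which the contraction constant is at most $1/2$, and paste backward. The only difference is in how the difference driver is cast into assumption (A): the paper takes $\mu(t)=u^{(p-1)/p}(t)$, $\nu\equiv 0$, $\varphi_t=u^{1/p}(t)|\hat y_t|$, $f_t=v(t)|\hat z_t|$ (so that $\int_0^T\mu^{p/(p-1)}=\int_0^T u<+\infty$ and the ``mismatch'' you anticipated never actually arises there either), whereas your choice $\mu=\nu=0$, $f_t=u(t)|\delta y_t|+v(t)|\delta z_t|$ is an equally valid and slightly more direct way to reach the same contraction estimate.
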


\begin{proof}
Define $\hat u([t_1,t_2]):=\int_{t_1}^{t_2} u(s){\rm d}s$ and
$\hat v([t_1,t_2]):=\int_{t_1}^{t_2}v^2(s){\rm d}s$ for each $0\leq t_1\leq t_2\leq T$.
Assume that $(y_t,z_t)_{t\in\T}\in{\s}^p(0,T;\R^{k})\times {\rm M}^p(0,T;\R^{k\times d})$. It follows from assumption (H3) that
$|g(s,y_s,z_s)|\leq|g(s,0,0)|+u(s)|y_s|+v(s)|z_s|$
and then from the inequality $(a+b+c)^p\leq 3^p(a^p+b^p+c^p)$ and H\"{o}lder's inequality that for each $t\in \T$,
$$
\begin{array}{lll}
\Dis\E\left[\left(\int_t^T|g(s,y_s,z_s)|\ {\rm d}s\right)^p\right]
&\leq & \Dis {3^p}\E\left[\left(\int_t^T|g(s,0,0)|\ {\rm
d}s\right)^p\right]+{3^p}\hat u^p([t,T])\cdot \E\left[\sup\limits_{s\in \T}|y_s|^p\right]\\
&& \Dis \ +{3^p} \hat v^{p\over 2}([t,T])\cdot\E\left[\left(\int_t^T |z_s|^2\ {\rm d}s\right)^{p/2}\right]<+\infty,
\end{array}
$$
As a result, the process $\left\{\E\left[\left.\xi+\int_0^Tg(s,y_s,z_s) {\rm d}s\right|\F_t\right]\right\}_{0\leq t\leq T}$ is an $L^p$ martingale. It follows from the martingale representation theorem that there exists a unique process $Z_t\in {\rm M}^p(0,T;\R^{k\times d})$ such that
$$
\E\left[\left.\xi+\int_0^Tg(s,y_s,z_s)\ {\rm
d}s\right|\F_t\right]=\E\left[\xi+\int_0^Tg(s,y_s,z_s)\ {\rm
d}s\right]+\int_0^t Z_s\ {\rm d}B_s,\ 0\leq t\leq T.
$$
Let
$Y_t:=\E\left[\left.\xi+\int_t^Tg(s,y_s,z_s)\ {\rm d}s\right|\F_t\right],\ 0\leq t\leq T$.
Obviously, $Y_t\in{\s}^p(0,T;\R^{k})$. It is not difficult to verify that the $(Y_t,Z_t)_{t\in\T}$ is just the unique solution in $L^p$ to the following equation:
\begin{equation}\label{equation6}
\Dis Y_t=\xi+\int_t^T g(s,y_s,z_s)\ {\rm d}s-\int_t^T Z_s\ {\rm d}B_s,\ \  t\in\T.
\end{equation}
Thus, we have constructed a mapping from ${\s}^p(0,T;\R^{k})\times{\rm M}^p(0,T;\R^{k\times d})$
to itself. Denote this mapping by $\Phi : (y_{\cdot},z_\cdot)\To(Y_{\cdot},Z_\cdot)$.\vspace{0.2cm}

In the sequel, suppose that
$(y_t^i,z_t^i)_{t\in\T}\in{\s}^p(0,T;\R^{k})\times {\rm M}^p(0,T;\R^{k\times d})$, let $(Y_t^i,Z_t^i)_{t\in\T}$ be the mapping of $(y_t^i,z_t^i)_{t\in\T}$, $(i=1,2)$, that is
$\Phi(y_\cdot^i,z_\cdot^i)=(Y_t^i,Z_t^i),\ i=1,2$. We denote
$\hat Y_t:=Y_t^1-Y_t^2, \ \hat Z_t:=Z_t^1-Z_t^2,\ \hat y_t:=y_t^1-y_t^2, \hat z_t:=z_t^1-z_t^2,
\ \hat g_t:=g(t,y^1_t,z^1_t)-g(t,y^2_t,z^2_t)$.
It follows from \eqref{equation6} that
\begin{equation}\label{equation7}
\Dis \hat Y_t=\int_t^T \hat g_s\ {\rm d}s-\int_t^T \hat Z_s\ {\rm d}B_s,\ \ t\in\T.
\end{equation}
Assumption (H3) yields that $|\hat g_t|\leq u(t)|\hat y_t|+v(t)|\hat z_t|$, which means that the generator $\hat g_t$ of BSDE \eqref{equation7} satisfies assumption (A) with $\mu(t)=u^{p-1\over p}(t)$, $\nu(t)\equiv 0$, $\psi(t,u)\equiv 0$, $\varphi_t=u^{1\over p}(t)|\hat y_t|$ and $f_t=v(t)|\hat z_t|$ due to the fact that
\vspace{-0.1cm}
\begin{equation}\label{equation8}
\left\{
\begin{array}{l}
\Dis \E\left[\int_t^T \left(u^{1\over p}(s)|\hat y_s|\right)^p\ {\rm d}s\right]\leq \hat u([t,T])\cdot\E\left[\sup\limits_{s\in [t,T]}|\hat y_s|^p \right]<+\infty;\\
\Dis \E\left[\left(\int_t^T v(s)|\hat z_s|\ {\rm
d}s\right)^p\right]\leq \hat v^{p\over 2}([t,T])\cdot\E\left[\left(\int_t^T
|\hat z_s|^2\ {\rm d}s\right)^{p/2}\right]<+\infty
\end{array}
\right.
\end{equation}
is true for each $t\in \T$ by H\"{o}lder's inequality. Thus, applying Propositions \ref{proposition1}--\ref{proposition2} to BSDE \eqref{equation7} implies that, in view of \eqref{equation8}, there exists a constant $m'_p>0$ depending only on $p$ such that for
each $t\in \T$,
$$
\left\{\begin{array}{l}
\begin{array}{lll}
\Dis \E\left[\left(\int_t^T |\hat Z_s|^2\ {\rm d}s\right)^{p/2}\right]&\leq &\Dis m'_p C'([t,T])
\left\{\hat
v^{p/2}([t,T])\cdot\E\left[\left(\int_t^T |\hat z_s|^2\ {\rm
d}s\right)^{p/2}\right]\right.\\
&& \Dis \hspace*{1.5cm}+\left.\hat
u([t,T])\cdot\E\left[\sup\limits_{s\in [t,T]}|\hat y_s|^p
\right]+\E\left[\sup\limits_{s\in [t,T]}|\hat Y_s|^p
\right]\right\},
\end{array}\\
\begin{array}{l}
\Dis \E\left[\sup\limits_{s\in [t,T]}|\hat Y_s|^{p}\right] \leq \Dis
K'([t,T])\left\{{1\over 2}\hat u([t,T])\cdot \E\left[\sup\limits_{s\in
[t,T]}|\hat y_s|^p \right]\right.\\
\hspace*{5.0cm}\Dis \left.+m'_{p}\hat
v^{p/2}([t,T])\cdot\E\left[\left(\int_t^T |\hat z_s|^2\ {\rm
d}s\right)^{p/2}\right]\right\}.
\end{array}
\end{array}\right.
$$
where
$C'([t,T]):=1+\hat u^{p-1}([t,T])+\hat u^{2p-2}([t,T])$ and
$K'([t,T]):=e^{m'_{p}\hat u([t,T])}$. Then we have
$$
\begin{array}{lll}
\Dis \E\left[\left(\int_t^T |\hat Z_s|^2\ {\rm d}s\right)^{p/2}\right]&\leq &\Dis m'_p C'([t,T])
\left\{({1\over 2}K'([t,T])+1)\hat
u([t,T])\cdot\E\left[\sup\limits_{s\in [t,T]}|\hat y_s|^p
\right]\right.\\
&& \Dis +\left.(K'([t,T])m'_p+1)\hat
v^{p/2}([t,T])\cdot\E\left[\left(\int_t^T |\hat z_s|^2\ {\rm
d}s\right)^{p/2}\right]\right\}.
\end{array}
$$

Since $\hat u([0,T])<+\infty$ and $\hat v([0,T])<+\infty$ by assumption (H3), we can find a positive integer $N$ and $0=T_0<T_1<\cdots<T_{N-1}<T_N=T$ such that for each $i=0,\cdots,N-1$,\vspace{-0.1cm}
\begin{equation}\label{equation9}
\left\{
\begin{array}{l}
\Dis K'([T_i,T_{i+1}]){\hat u([T_i,T_{i+1}])\over 2}+m'_pC'([T_i,T_{i+1}])\left({1\over 2}K'([T_i,T_{i+1}])+1\right)\hat
u([T_i,T_{i+1}])\leq {1\over 2};\vspace{0.1cm}\\
\Dis K'([T_i,T_{i+1}])m'_{p}\hat v^{p\over
2}([T_i,T_{i+1}])+m'_p C'([T_i,T_{i+1}])(K'([T_i,T_{i+1}])m'_p+1)\hat v^{p\over
2}([T_i,T_{i+1}])\leq {1\over 2}.
\end{array}\right.\vspace{0.1cm}
\end{equation}

Based on the above arguments, we can deduce that
$$
\begin{array}{lll}
&&\Dis \E\left[\sup\limits_{s\in [T_{N-1},T]}|\hat Y_s|^{p}\right] +\E\left[\left(\int_{T_{N-1}}^{T}
|\hat Z_s|^2\ {\rm d}s\right)^{p/2} \right]\\
&\leq & \Dis {1\over 2}\left\{ \E\left[\sup\limits_{s\in
[T_{N-1},T]}|\hat y_s|^{p}\right]
+\E\left[\left(\int_{T_{N-1}}^{T} |\hat z_s|^2\ {\rm
d}s\right)^{p/2} \right]\right\},
\end{array}
$$
which means that $\Phi$ is a strict contraction from
${\s}^p(T_{N-1},T;\R^{k})\tim {\rm M}^p(T_{N-1},T;\R^{k\tim d})$ into itself. Then $\Phi$ has a unique fixed point in this space. It follows that there exists a unique
$(y_t,z_t)_{t\in [T_{N-1},T]}\in{\s}^p(T_{N-1},T;\R^{k})\times {\rm M}^p(T_{N-1},T;\R^{k\tim d})$
satisfying the BSDE with parameters $(\xi,T,g) $ on $[T_{N-1},T]$. That is to say, the BSDE has a unique solution in $L^p$ on $[T_{N-1},T]$. Finally, note that \eqref{equation9} holds true
for $i=N-2$. By replacing $T_{N-1}$, $T$ and $\xi$ by $T_{N-2}$, $T_{N-1}$ and $y_{T_{N-1}}$ respectively in the above proof except for the paragraph containing \eqref{equation9}, we can
obtain the existence and uniqueness of a solution in $L^p$ to the BSDE with parameters $(\xi,T,g)$ on $[T_{N-2},T_{N-1}]$. Furthermore, repeating the above procedure and making
use of \eqref{equation9}, we deduce the existence and uniqueness of a solution in $L^p$ to the BSDE with parameters $(\xi,T,g)$ on $[T_{N-3},T_{N-2}]$, $\cdots$, $[0,T_1]$. The proof
of Proposition \ref{proposition3} is then completed.
\end{proof}

Now, we are in a position to prove Theorem \ref{theoremmainresult}. Let $0<T\leq +\infty$, $\xi\in \Lp$ and $g$ satisfy (H4) and (H5). We can construct the Picard approximation sequence
of the BSDE with parameters $(\xi,T,g)$ as follows:
\begin{equation}\label{equation10}
y_t^0=0;\ \ \ \  \Dis y_t^n=\xi+\int_t^Tg(s,y_s^{n-1},z_s^n)\ {\rm
d}s-\int_t^Tz_s^n\ {\rm d}B_s,\ \  t\in\T.
\end{equation}
Indeed, for each $n\geq 1$, it follows from assumption (H4) that
$$
\begin{array}{lll}
|g(s,y_s^{n-1},0)|&\leq & \Dis |g(s,0,0)|+\alpha(s)\rho^{1\over p}(s,|y_s^{n-1}|^p)\\
&\leq & \Dis |g(s,0,0)|+\alpha(s)\left(a^{1\over p}(s)+b^{1\over p}(s)|y_s^{n-1}|\right),
\end{array}
$$
and then
$$
\begin{array}{lll}
\Dis \E\left[\left(\int_0^T|g(s,y_s^{n-1},0)|\ {\rm
d}s\right)^p\right] &\leq & \Dis
3^p\E\left[\left(\int_0^T|g(s,0,0)|\ {\rm
d}s\right)^p\right]+3^p\left(\int_0^T \alpha(s)a^{1\over p}(s)\ {\rm
d}s\right)^p\\
&& \Dis +3^p\left(\int_0^T \alpha(s)b^{1\over p}(s)\ {\rm
d}s\right)^p\E\left[\sup\limits_{s\in\T} |y_s^{n-1}|^p\right] .\end{array}
$$
Furthermore, by Remark \ref{remark2} and assumption (H4), the generator $g(s,y_s^{n-1},z)$ of BSDE \eqref{equation10} satisfies (H5) and (H3) with $u(t)=0$ and $v(t)=\beta(t)$.
It follows from Proposition \ref{proposition3} that the equation \eqref{equation10} has a unique solution $(y_t^n,z_t^n)_{t\in\T}$ in $L^p$ for each $n\geq 1$. With respect to the processes $(y_t^n,z_t^n)_{t\in\T}$, we have the following Lemma \ref{lemma3} and Lemma \ref{lemma4}.  For notational convenience, in the following for each $0\leq t_1\leq t_2\leq T$, we define
$$\hat \alpha([t_1,t_2]):=\int_{t_1}^{t_2}\alpha^{p\over p-1}(s)\ {\rm d}s\
{\rm and} \ \ \hat\beta([t_1,t_2]):=\int_{t_1}^{t_2} \beta^2(s)\ {\rm d}s.
$$

\begin{lemma}\label{lemma3}
  Under the hypotheses of Theorem \ref{theoremmainresult}, there exists a constant $\bar m_p>0$ depending only on $p$ such that for each $t\in \T, n,m\geq 1$,
  \begin{equation}\label{equation11}
  \left\{
  \begin{array}{l}
  \begin{array}{lll}
  \Dis \E\left[\left(\int_t^T |z_s^{n+m}-z_s^n|^2\ {\rm
  d}s\right)^{p/2}\right]&\leq&\Dis\bar m_p\bar C([t,T])
  \left\{\E\left[\sup\limits_{s\in
  [t,T]}|y_s^{n+m}-y_s^n|^p\right]\right. \\
  && \Dis \hspace{0.7cm}+\left. \int_t^T \rho\left(s,\E\left[
  |y_s^{n+m-1}-y_s^{n-1}|^p\right]\right)\ {\rm d}s\right\};
  \end{array}\\
  \Dis \E\left[\sup\limits_{s\in
  [t,T]}|y_s^{n+m}-y_s^n|^p\right] \leq \Dis {1\over 2}\bar
  K([t,T])\int_{t}^T\rho\left(s,\E\left[
  |y_s^{n+m-1}-y_s^{n-1}|^p\right]\right){\rm d}s,
  \end{array}\right.
  \end{equation}
  where
  $$
  \left\{\begin{array}{l}
  \Dis \bar C([t,T]):=1+\hat \alpha^{p-1}([t,T])+\hat
  \alpha^{2p-2} ([t,T]) +\hat \beta^{p/2} ([t,T])+\hat \beta^{p} ([t,T]);\vspace{0.1cm}\\
  \Dis \bar K([t,T]):=e^{\bar m_{p}(\hat\alpha([t,T])+\hat\beta([t,T]))}.
  \end{array}\right.
  $$
\end{lemma}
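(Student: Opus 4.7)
The plan is to apply Propositions \ref{proposition1} and \ref{proposition2} to the BSDE satisfied by the pair of differences $(y^{n+m}_t - y^n_t,\ z^{n+m}_t - z^n_t)$. Subtracting the Picard equation \eqref{equation10} at levels $n$ and $n+m$ shows that this pair solves a BSDE on $\T$ with zero terminal condition, no $f$-term, and generator
\[
\tilde g_s := g(s,y^{n+m-1}_s,z^{n+m}_s)-g(s,y^{n-1}_s,z^n_s).
\]
Assumption (H4) gives the pointwise bound
\[
|\tilde g_s|\leq \alpha(s)\rho^{1/p}(s,|y^{n+m-1}_s-y^{n-1}_s|^p)+\beta(s)|z^{n+m}_s-z^n_s|,
\]
so $\tilde g_s$ fits assumption (A) with $\mu(s)=\alpha(s)$, $\nu(s)=\beta(s)$, $\psi\equiv 0$, $f\equiv 0$, and $\varphi_s := \rho^{1/p}(s,|y^{n+m-1}_s-y^{n-1}_s|^p)$.

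The first thing I would verify is the integrability $\E[\int_0^T \varphi_s^p\,{\rm d}s]<+\infty$ required by (A). Since $\rho\in {\bf S}[T,a(\cdot),b(\cdot)]$ gives $\rho(s,u)\leq a(s)+b(s)u$, and since $y^{n-1},y^{n+m-1}\in\s^p(0,T;\R^k)$ by the inductive construction of the Picard sequence through Proposition \ref{proposition3}, this bound is an immediate application of Remark \ref{remark1}.

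Next, I would apply Propositions \ref{proposition1} and \ref{proposition2} directly to this difference BSDE. Because $\xi=0$, $\psi\equiv 0$ and $f\equiv 0$, Proposition \ref{proposition1} collapses to
\[
\E\left[\left(\int_t^T |z^{n+m}_s-z^n_s|^2\,{\rm d}s\right)^{p/2}\right] \leq m_p \bar C([t,T])\left\{\E\left[\sup_{s\in[t,T]}|y^{n+m}_s-y^n_s|^p\right]+\E\left[\int_t^T \varphi_s^p\,{\rm d}s\right]\right\},
\]
and Proposition \ref{proposition2} collapses to
\[
\E\left[\sup_{s\in[t,T]}|y^{n+m}_s-y^n_s|^p\right] \leq \frac{1}{2}\,e^{k_p(\hat\alpha([t,T])+\hat\beta([t,T]))}\,\E\left[\int_t^T \varphi_s^p\,{\rm d}s\right].
\]
Setting $\bar m_p:=\max(m_p,k_p)$ aligns the constants with the lemma's $\bar C([t,T])$ and $\bar K([t,T])$.

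Finally, I would push the expectation through $\rho$ by Fubini's theorem and Jensen's inequality applied to the concave map $\rho(s,\cdot)$:
\[
\E\left[\int_t^T \varphi_s^p\,{\rm d}s\right] = \int_t^T \E\left[\rho(s,|y^{n+m-1}_s-y^{n-1}_s|^p)\right]{\rm d}s \leq \int_t^T \rho\left(s,\E[|y^{n+m-1}_s-y^{n-1}_s|^p]\right){\rm d}s,
\]
which is exactly what appears on the right-hand side of \eqref{equation11}. The argument is essentially a change of variables into the framework of the two preceding propositions, and I do not anticipate a real obstacle beyond the bookkeeping of constants. The only mildly delicate point is the verification of the $L^p$-integrability of $\varphi$, which is handled by Remark \ref{remark1} together with the fact that the Picard iterates already reside in $\s^p$.
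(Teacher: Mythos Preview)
Your proposal is correct and follows essentially the same route as the paper: both recognize that $(y^{n+m}-y^n,\,z^{n+m}-z^n)$ solves a BSDE with zero terminal value whose generator, by (H4), satisfies assumption (A) with $\mu=\alpha$, $\nu=\beta$, $\psi\equiv0$, $f\equiv0$ and $\varphi_s=\rho^{1/p}(s,|y^{n+m-1}_s-y^{n-1}_s|^p)$, then apply Propositions~\ref{proposition1}--\ref{proposition2} followed by Fubini and Jensen (concavity of $\rho(s,\cdot)$). The only cosmetic difference is that the paper writes the generator explicitly as a function of $z$, namely $f_{n,m}(s,z)=g(s,y^{n+m-1}_s,z+z^n_s)-g(s,y^{n-1}_s,z^n_s)$, whereas you write it as a process $\tilde g_s$; this is harmless since (A) is only ever used along the solution.
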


\begin{proof}
It follows from \eqref{equation10} that the process
$(y_t^{n+m}-y_t^n,z_t^{n+m}-z_t^n)_{t\in\T}$ is a solution of the following BSDE:
\begin{equation}\label{equation12}
\Dis y_t=\int_t^T f_{n,m}(s,z_s)\ {\rm d}s-\int_t^Tz_s\ {\rm d}B_s,\ \ t\in\T,
\end{equation}
where $f_{n,m}(s,z):=g(s,y_s^{n+m-1},z+z_s^{n})-g(s,y_s^{n-1}, z_s^{n})$. It follows from assumption (H3) that
$|f_{n,m}(s,z)|\leq \alpha(s)\rho^{1\over p}(s,|y_s^{n+m-1}-y_s^{n-1}|^p)+\beta(s)|z|$,
which means that assumption (A) is satisfied for the generator $f_{n,m}(t,z)$ of BSDE \eqref{equation12} with $\mu(t)=\alpha(t)$, $\nu(t)=\beta (t)$, $\psi(t,u)\equiv 0$, $f_t\equiv 0$ and $\varphi_t=\rho^{1\over p}(t,|y_t^{n+m-1}-y_t^{n-1}|^p)$ by Remark \ref{remark1}. Thus, in view of the fact that $\rho(s,\cdot)$ is a concave function for each $s\in \T$, the conclusion \eqref{equation11} follows from Proposition \ref{proposition1}, Proposition \ref{proposition2}, and then Fubini's theorem and Jensen's inequality. Lemma \ref{lemma3} is proved.
\end{proof}

\begin{lemma}\label{lemma4}
Under the hypotheses of Theorem \ref{theoremmainresult},  there exists a constant $\hat m_p>0$ depending only on $p$ such that for each $n\geq 1$ and each $t\in [0,T]$,
\begin{equation}\label{equation13}
\E\left[\sup\limits_{r\in [t,T]}|y_r^n|^p\right]\leq  \Dis
\hat C([t,T])+{1\over 2}e^{\hat m_p(\hat\alpha([t,T])+\hat\beta([t,T]))}\int_{t}^T
\rho\left(s,\E\left[|y_s^{n-1}|^p\right]\right) \ {\rm d}s,
\end{equation}
where
$$
\hat C([t,T]):=\hat m_pe^{\hat m_p(\hat\alpha([t,T])+\hat\beta([t,T]))}\left\{\E|\xi|^p+
\E\left[\left(\int_{t}^T|g(s,0,0)|{\rm d}s\right)^p\right]\right\}.
$$
\end{lemma}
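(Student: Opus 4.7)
The plan is to apply Proposition \ref{proposition2} directly to the BSDE \eqref{equation10} satisfied by $(y^n_t,z^n_t)_{t\in\T}$, viewing its generator $\tilde g^n(s,y,z):=g(s,y_s^{n-1},z)$ (which is independent of the ``current'' $y$) as a process fitting assumption (A). First I would note that the existence of $(y^n,z^n)$ as an $L^p$ solution is already guaranteed by Proposition \ref{proposition3} (as argued just before Lemma \ref{lemma3}), so Propositions \ref{proposition1}--\ref{proposition2} are legitimately available.

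The decomposition step is the substantive choice. By (H4) applied at $(y_s^{n-1},0)$ and the triangle inequality,
$$|\tilde g^n(s,y,z)|=|g(s,y_s^{n-1},z)|\leq |g(s,0,0)|+\alpha(s)\rho^{1\over p}(s,|y_s^{n-1}|^p)+\beta(s)|z|,$$
so (A) holds for $\tilde g^n$ with the choices $\mu(t)=\alpha(t)$, $\nu(t)=\beta(t)$, $\psi(t,u)\equiv 0$, $\varphi_t=\rho^{1\over p}(t,|y_t^{n-1}|^p)$ and $f_t=|g(t,0,0)|$. The integrability requirements on $\varphi$ and $f$ are ensured by Remark \ref{remark1} (using that $y^{n-1}\in\s^p$) and by (H5) respectively. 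With these identifications one has $\bar\mu(t)=\hat\alpha([t,T])$ and $\bar\nu(t)=\hat\beta([t,T])$.

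Plugging into Proposition \ref{proposition2} gives
$$\E\left[\sup_{s\in[t,T]}|y_s^n|^p\right]\leq e^{k_p(\hat\alpha([t,T])+\hat\beta([t,T]))}\left\{k_p\E[|\xi|^p]+k_p\E\left[\left(\int_t^T|g(s,0,0)|\,{\rm d}s\right)^p\right]+\tfrac{1}{2}\E\left[\int_t^T\rho(s,|y_s^{n-1}|^p)\,{\rm d}s\right]\right\},$$
since the $\psi$-term vanishes. The last step is to bring the expectation inside $\rho$: Fubini's theorem and Jensen's inequality (justified by the concavity of $\rho(s,\cdot)$ for each fixed $s$) yield
$$\E\left[\int_t^T\rho(s,|y_s^{n-1}|^p)\,{\rm d}s\right]\leq \int_t^T\rho\left(s,\E[|y_s^{n-1}|^p]\right)\,{\rm d}s.$$
Setting $\hat m_p:=k_p$ (or any constant depending only on $p$ dominating the $k_p$ appearing in Proposition \ref{proposition2}) and regrouping the $\xi$- and $g(s,0,0)$-terms into the prefactor $\hat C([t,T])$ gives \eqref{equation13}.

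There is no real obstacle here: once one recognizes that the Picard step's BSDE has $y$-independent generator and that the $\rho^{1/p}$-term from (H4) should be absorbed into the $\varphi$-slot of (A) with $\psi\equiv 0$, the estimate is just a routine specialization of Proposition \ref{proposition2} followed by Jensen. The only minor care needed is to verify the integrability conditions on $\varphi_t$ and $f_t$ required by (A), which follow at once from Remark \ref{remark1} together with the a priori membership $y^{n-1}\in\s^p$ and from (H5).
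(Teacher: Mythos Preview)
Your proposal is correct and follows essentially the same route as the paper: bound $|g(s,y_s^{n-1},z)|$ via (H4) and the triangle inequality, identify the parameters of assumption~(A) as $\mu=\alpha$, $\nu=\beta$, $\psi\equiv 0$, $\varphi_t=\rho^{1/p}(t,|y_t^{n-1}|^p)$, $f_t=|g(t,0,0)|$, apply Proposition~\ref{proposition2}, and finish with Fubini and Jensen. The paper's proof is the same in all substantive respects.
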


\begin{proof}
It follows from the hypotheses of Theorem \ref{theoremmainresult} that
$$
\begin{array}{lll}
|g(s,y_s^{n-1},z)| &\leq & |g(s,y_s^{n-1},z)-g(s,0,0)|+|g(s,0,0)|\\
 &\leq & \alpha(s) \rho^{1\over p}(s,|y_s^{n-1}|^p)+\beta(s)|z|+|g(s,0,0)|.
\end{array}
$$
Then, assumption (A) is satisfied for the generator $g(s,y_s^{n-1},z)$ of BSDE \eqref{equation10} with $\mu(t)=\alpha(t)$, $\nu(t)=\beta (t)$,
$\psi(t,u)\equiv 0$, $f_t= |g(t,0,0)|$ and $\varphi_t=\rho^{1\over p}(t,|y_t^{n-1}|^p)$ by Remark \ref{remark1}. Thus, in view of the
fact that $\rho(t,\cdot)$ is a concave function for each $t\in\T$, \eqref{equation13} follows from Proposition \ref{proposition2} and then Fubini's Theorem and Jensen's inequality. Lemma
\ref{lemma4} is proved.
\end{proof}

In the sequel, since $\hat \alpha([0,T])<+\infty$ and $\hat \beta([0,T])<+\infty$ by assumption (H4), we can find a positive integer $\bar N$ and $0=\bar T_0<\bar T_1<\cdots<\bar T_{\bar N-1}<\bar T_{\bar N}=T$ such that for each $i=0,\cdots,\bar N-1$,
\begin{equation}\label{equation14}
\int_{\bar T_i}^{\bar T_{i+1}} b(s){\rm d}s\leq {1\over 2}\ \ {\rm and}\ \
\hat\alpha([\bar T_i, \bar T_{i+1}])+\hat\beta([\bar T_i, \bar T_{i+1}])\leq {\ln 2\over \hat m_p}\wedge{\ln
2\over \bar m_p},
\end{equation}
where $\bar m_p$ and $\hat m_p$ are respectively defined in Lemma \ref{lemma3} and Lemma \ref{lemma4}.\vspace{0.2cm}

With the help of Lemma \ref{lemma3} and Lemma \ref{lemma4}, we can prove Theorem \ref{theoremmainresult}.\vspace{0.2cm}

{\bf Proof of Theorem \ref{theoremmainresult}.}
Existence: \ Let us set
$M=2\hat C([0,T])+2\int_{0}^T a(s){\rm d}s\geq 0$. It follows from (H4) and \eqref{equation14} that for each $t\in [\bar T_{\bar N-1},T]$,\vspace{-0.2cm}
\begin{equation}\label{equation15}
\hat C([0,T])+\int_{t}^T \rho(s,M){\rm d}s\leq \hat C([0,T])+\int_{t}^T a(s){\rm d}s +M\int_{t}^T b(s){\rm d}s \leq {M\over 2}+{M\over 2}=M,\ \
\end{equation}
and from Lemma \ref{lemma4} and \eqref{equation14} that
\begin{equation}\label{equation16}
\E\left[\sup\limits_{r\in [t,T]}|y_r^n|^p\right]\leq  \Dis
\hat C([0,T])+\int_{t}^T \rho\left(s,\E\left[|y_s^{n-1}|^p\right]\right) \
{\rm d}s,\ \ t\in [\bar T_{\bar N-1},T].
\vspace{-0.2cm}\end{equation}
Since $\rho(s, \cdot)$ is a nondecreasing function for each $s\in\T$, by \eqref{equation16} and \eqref{equation15} we can deduce that for each $t\in[\bar T_{\bar N-1},T]$, $\E\left[\sup_{r\in [t,T]}|y_r^1|^p\right]\leq \hat C([0,T])\leq M$,
\vspace{-0.2cm}
\begin{align*}
  \E\left[\sup_{r\in [t,T]}|y_r^2|^p\right]&\leq\hat C([0,T])+\int_{t}^T\rho(s,\E[|y_s^1|^p])\,{\rm d}s\leq\hat C([0,T])+\int_{t}^T\rho(s,M)\,{\rm d}s \leq M,\\
  \E\left[\sup_{r\in [t,T]}|y_r^3|^p\right]&\leq\hat C([0,T])+\int_{t}^T\rho(s,\E[|y_s^2|^p])\,{\rm d}s\leq\hat C([0,T])+\int_{t}^T\rho(s,M)\,{\rm d}s \leq M.
\end{align*}
Thus, by induction we know that for each $n\geq 1$ and each $t\in [\bar T_{\bar N-1},T]$,\vspace{-0.1cm}
\begin{equation}\label{equation17}
\E\left[\sup_{r\in [t,T]}|y_r^n|^p\right]\leq M.
\end{equation}\vspace{-0.1cm}

Now, we define a sequence of functions $\{\varphi_n(t)\}_{n\geq 1}$ as follows:
\begin{equation}\label{equation18}
\Dis \varphi_0(t)=\int_t^T\rho(s,M)\ {\rm d}s;\ \ \
\varphi_{n+1}(t)=\int_t^T\rho(s,\varphi_n(s))\ {\rm d}s.
\vspace{-0.2cm}\end{equation}
For all $t\in [\bar T_{\bar N-1},T]$, it follows from \eqref{equation15} that $\varphi_0(t)=\int_{t}^T\rho(s,M)\ {\rm d}s\leq M$. Furthermore, by induction we can obtain
that for all $n\geq 1$, $\varphi_n(t)$ satisfies
$0\leq \varphi_{n+1}(t)\leq \varphi_n(t)\leq \cdots\leq\varphi_{1}(t)\leq \varphi_0(t)\leq M$.
Then, for each $t\in [\bar T_{\bar N-1},T]$, the limit of the sequence $\{\varphi_n(t)\}_{n\geq 1}$ must exist: we denote it by $\varphi(t)$. Letting $n\To \infty$ in \eqref{equation18}, in view of the fact that $\rho(s,\cdot)$ is a continuous function for each $s\in\T$, $\rho(s,\varphi_n(s))\leq \rho(s,M)$ for each $n\geq 1$, and $\int_t^T \rho(s,M)\ {\rm d}s\leq M$, we can deduce from Lebesgue's dominated convergence theorem that for each $t\in [\bar T_{\bar N-1},T]$, $\varphi(t)=\int_t^T\rho(s,\varphi(s))\ {\rm d}s$, whether $T<+\infty$ or $T=+\infty$. Then, by virtue of (H4) we know that $\varphi(t)=0,\ t\in [\bar T_{\bar N-1},T]$.\vspace{0.2cm}

In the sequel, for each $t\in [\bar T_{\bar N-1},T]$ and $n,m\geq 1$, it follows from Lemma \ref{lemma3}, \eqref{equation14} and \eqref{equation17} that, whether $T<+\infty$ or $T=+\infty$,
$$
\begin{array}{l}
\Dis \E\left[\sup\limits_{r\in [t,T]}|y_r^{1+m}-y_r^1|^p\right]
\leq \Dis \int_t^T \rho\left(s,\E\left[|y_s^m|^p\right]\right)\
{\rm d}s
\leq \Dis \int_t^T\rho(s,M)\ {\rm d}s =\varphi_0(t)\leq M,\\
\Dis \E\left[\sup\limits_{r\in
[t,T]}|y_r^{2+m}-y_r^2|^p\right]\leq \Dis
\int_t^T\rho\left(s,\E\left[|y_s^{1+m}-y_s^1|^p\right]\right)\
{\rm d}s\leq \Dis \int_t^T\rho(s,\varphi_0(s))\ {\rm
d}s=\varphi_1(t),\\
\Dis \E\left[\sup\limits_{r\in
[t,T]}|y_r^{3+m}-y_r^3|^p\right]\leq \Dis \int_t^T
\rho\left(s,\E\left[|y_s^{2+m}-y_s^2|^p\right]\right)\ {\rm d}s
\leq \Dis \int_t^T\rho(s,\varphi_1(s))\ {\rm d}s=\varphi_2(t).
\end{array}
$$\vspace{-0.2cm}

Thus, by induction we can derive that for each $m\geq 1$,
$$\E\left[\sup\limits_{\bar T_{\bar N-1}\leq r\leq T}|y_r^{n+m}-y_r^n|^p\right]\leq \varphi_{n-1}(\bar T_{\bar N-1})\rightarrow 0,\ \ n\rightarrow \infty.\vspace{-0.2cm}
$$
which means that $\{y_t^n\}_{n\geq 1}$ is a Cauchy sequence in $\s^p(\bar T_{\bar N-1},T;\R^k)$. Furthermore, since $\rho(s,\cdot)$ is continuous and $\rho(s,0)=0$ for each $s\in \T$, $\int_{\bar T_{\bar N-1}}^T \rho(s,M){\rm d}s\leq M$, and $\rho(s,\E\left[|y_s^{n+m-1}-y_s^{n-1}|^p\right])\leq
\rho(s,M)$ for each $s\in [\bar T_{\bar N-1},T]$, we also know from \eqref{equation11}
and Lebesgue's dominated convergence theorem that $\{z_t^n\}_{n\geq 1}$ is a Cauchy sequence in ${\rm M}^p(\bar T_{\bar N-1},T;\R^{k\times d})$. Define their limits by $(y_t)_{t\in [\bar T_{\bar N-1},T]}$ and $(z_t)_{t\in [\bar T_{\bar N-1},T]}$ respectively. Letting $n\To \infty$ in \eqref{equation10} implies that $(y_t,z_t)$ is a solution in $L^p$ to the BSDE with parameters $(\xi,T,g)$ on $[\bar T_{\bar N-1},T]$.\vspace{0.2cm}

Finally, note that \eqref{equation14} holds true for $i=\bar N-2$. By replacing $\bar T_{\bar N-1}$, $T$ and $\xi$ with $\bar T_{\bar N-2}$, $\bar T_{\bar N-1}$ and $y_{\bar T_{\bar N-1}}$ respectively in the above arguments beginning from the end of the
proof of Proposition \ref{proposition3} (except for the paragraph containing \eqref{equation14}), we can obtain the existence of a solution in $L^p$ to the BSDE with parameters $(\xi,T,g)$
on $[\bar T_{\bar N-2},\bar T_{\bar N-1}]$. Furthermore, repeating the above procedure and making use of \eqref{equation14}, we deduce the existence of a solution in $L^p$ to the BSDE with parameters $(\xi,T,g)$ on $[\bar T_{\bar N-3},\bar T_{\bar N-2}]$, $\cdots$,
$[0,\bar T_1]$. This proves the existence.\vspace{0.3cm}

Uniqueness: Let $(y_t^i,z_t^i)_{t\in\T}\ (i=1,2)$ be two solutions in $L^p$ of the BSDE with parameters $(\xi,T,g)$. Then, $(y_t^1-y_t^2,z_t^1-z_t^2)_{t\in\T}$ is a solution in $L^p$ to
the following BSDE:
\begin{equation}\label{equation19}
\Dis y_t=\int_t^T \hat g(s,y_s,z_s)\ {\rm d}s-\int_t^Tz_s\ {\rm d}B_s,\ \ t\in\T,
\end{equation}
where $\hat g(s,y,z):=g(s,y+y_s^{2},z+z_s^{2})-g(s,y_s^{2},z_s^{2})$. It follows from (H4) that $|\hat g(s,y,z)|\leq \alpha(s)\rho^{1\over p}(s,|y|^p)+\beta(s)|z|$, which means that assumption (A) is satisfied for the generator $\hat g(t,y,z)$ of BSDE \eqref{equation19} with $\mu(t)=\alpha(t)$, $\nu(t)=\beta(t)$, $\psi(t,u)=\rho(t,u)$, $\varphi_t\equiv 0$ and $f_t\equiv 0$. Then,
Proposition \ref{proposition1} and Proposition \ref{proposition2} yield that there exists a constant $\tilde m_p>0$ depending only on $p$ such that for $t\in [0,T]$,
\begin{equation}\label{equation20}
\left\{
\begin{array}{l}
\Dis\E\left[\left(\int_t^T |z_s^1-z_s^2|^2\ {\rm
d}s\right)^{p/2}\right]\leq  \tilde m_p\tilde C([t,T])
\left\{\E\left[\sup\limits_{s\in [t,T]}|y_s^1-y_s^2|^p\right]\right.\\
\Dis \hspace*{7.5cm}\left.+\int_t^T \rho(s,\E[|y_s^1-y_s^2|^p])\ {\rm d}s\right\};\\
\Dis \E\left[|y_t^{1}-y_t^2|^p\right]\leq  \Dis {1\over 2}e^{\tilde m_p(\hat\alpha([t,T])+\hat\beta([t,T]))}\int_{t}^T\rho\left(s,\E\left[
|y_s^{1}-y_s^{2}|^p\right]\right)\ {\rm d}s,
\end{array}\right.
\end{equation}
where $\tilde C([t,T]):=1+\hat \alpha^{p-1}([t,T])+\hat
\alpha^{2p-2} ([t,T]) +\hat\beta^{p/2} ([t,T])+\hat\beta^{p} ([t,T])$.\vspace{0.3cm}

Similar to \eqref{equation14}, we can find a positive integer $\tilde N$ and
$0=\tilde T_0<\tilde T_1<\cdots<\tilde T_{\tilde N-1}<\tilde T_{\tilde N}=T$
such that for each $i=0,\cdots,\tilde N-1$,
\begin{equation}\label{equation21}
\hat\alpha([\tilde T_i, \tilde T_{i+1}])+\hat\beta([\tilde T_i, \tilde T_{i+1}])\leq {\ln 2\over \tilde m_p}.
\end{equation}
Then, it follows from \eqref{equation20} and \eqref{equation21} that for each
$t\in [\tilde T_{\tilde N-1},T]$,
\begin{equation}
\Dis \E\left[|y_t^{1}-y_t^2|^p\right]\leq  \Dis
\int_{t}^T\rho\left(s,\E\left[ |y_s^{1}-y_s^{2}|^p\right]\right)\
{\rm d}s.
\end{equation}
From the ODE comparison theorem, we know that $\E[|y_t^{1}-y_t^{2}|^p]\leq r(t)$, where $r(t)$ is the maximum left shift solution of the following equation:
$$ u'(t)=-\rho(t,u);\ u(T)=0.$$
It follows from (H4) that $r(t)=0,\ t\in [\tilde T_{\tilde N-1},T]$. Hence, $\E[|y_t^{1}-y_t^{2}|^p]=0,\ t\in [\tilde T_{\tilde N-1},T]$, which means $y_t^1=y_t^2$ for each $t\in [\tilde T_{\tilde N-1},T]$. Furthermore, \eqref{equation20} implies that $z_t^1=z_t^2$ holds true almost surely for each $t\in [\tilde T_{\tilde N-1},T]$. Thus, we have obtained the uniqueness result on $[\tilde T_{\tilde N-1},T]$. Then, thanks to \eqref{equation21}, we can repeat the above for the proof of uniqueness by replacing $\tilde T_{\tilde N-1}$, $T$ and $\xi$ with $\tilde T_{\tilde N-2}$, $\tilde T_{\tilde N-1}$ and $y_{\tilde T_{\tilde N-1}}$ respectively to obtain the uniqueness result on $[\tilde T_{\tilde N-2},\tilde T_{\tilde N-1}]$ and then on the whole $\T$. The proof of Theorem \ref{theoremmainresult} is complete.\vspace{-0.2cm} \hfill $\Box$

\section{Examples, Corollaries and Remarks}\label{sectionexamplecorollaryremark}\setcounter{equation}{0}
In this section, we will introduce some examples, corollaries and remarks to show that Theorem \ref{theoremmainresult} of this paper is a generalization of the main results in \citet{Par90}, \citet{Mao95}, \citet{Chen97}, \citet{Chen00}, \citet{Wang03} and \citet{Wang09}. Firstly, by Remark \ref{remark2} and Theorem \ref{theoremmainresult}, the following corollary is immediate, which generalizes the main results in \citet{Mao95}, \citet{Wang03} and \citet{Wang09}.

\begin{corollary}\label{corollary2}
Let $0<T<+\infty$ and $g$ satisfy (H4) and $g(\cdot,0,0)\in {\rm M}^p(0,T;\R^{k})$. Then, for each $\xi\in\Lp$, the BSDE with parameters $(\xi,T,g)$ has a unique solution in $L^p$.
\end{corollary}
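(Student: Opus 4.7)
The plan is to deduce Corollary \ref{corollary2} directly from Theorem \ref{theoremmainresult}. Since (H4) is hypothesized in both statements, the only gap is that Theorem \ref{theoremmainresult} requires (H5), i.e., $\E\!\left[\left(\int_0^T|g(t,0,0)|\,{\rm d}t\right)^p\right]<+\infty$, whereas Corollary \ref{corollary2} assumes the a priori different condition $g(\cdot,0,0)\in{\rm M}^p(0,T;\R^k)$ under the extra restriction $T<+\infty$. It therefore suffices to verify that, on a bounded time interval, the latter integrability condition implies the former; the existence and uniqueness of an $L^p$ solution then follows verbatim from Theorem \ref{theoremmainresult}.

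The key step is an application of H\"{o}lder's inequality pathwise on $[0,T]$ with the constant weight $1$, yielding
$$\int_0^T |g(t,0,0)|\,{\rm d}t\leq T^{1/2}\left(\int_0^T |g(t,0,0)|^2\,{\rm d}t\right)^{1/2}.$$
Raising to the $p$-th power and taking expectations gives
$$\E\!\left[\left(\int_0^T|g(t,0,0)|\,{\rm d}t\right)^p\right]\leq T^{p/2}\,\E\!\left[\left(\int_0^T|g(t,0,0)|^2\,{\rm d}t\right)^{p/2}\right]<+\infty,$$
where finiteness of the right-hand side is precisely the assumption $g(\cdot,0,0)\in{\rm M}^p(0,T;\R^k)$ together with $T<+\infty$. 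This establishes (H5).

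With both (H4) and (H5) in hand, I would invoke Theorem \ref{theoremmainresult} to conclude that, for each $\xi\in\Lp$, the BSDE with parameters $(\xi,T,g)$ admits a unique solution in $L^p$. No genuine obstacle is anticipated: the whole content of the corollary is the elementary comparison between the $L^1_t$ and $L^2_t$ integrability of $g(\cdot,0,0)$ on a bounded interval, an observation already recorded in Remark \ref{remark2}. The corollary is thus the restatement of the main theorem in the more classical framework where $g(\cdot,0,0)$ belongs to $\mathrm{M}^p$ on a finite horizon, which is exactly the setting considered by \citet{Mao95}, \citet{Wang03} and \citet{Wang09}.
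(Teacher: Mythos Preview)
Your proposal is correct and follows exactly the paper's approach: the paper states that the corollary is immediate from Remark \ref{remark2} (which records precisely the H\"{o}lder inequality step you wrote out) together with Theorem \ref{theoremmainresult}. There is nothing to add.
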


\begin{Remark}\label{remark3}
Theorem \ref{theoremmainresult} in \citet{Wang09} proved that if
$0<T<+\infty $, $g$ satisfies assumption (H2), and
$g(\cdot,0,0)\in {\rm M}^2(0,T;\R^{k})$, then the BSDE with
parameters $(\xi,T,g)$ has a unique solution in $L^2$. This result
can be regarded as an immediate consequence of Corollary \ref{corollary2}. Indeed, it follows that if $g$ satisfies (H2), then $g$ must satisfy (H4) with $p=2$, $\alpha(t)\equiv 1$, $\beta(t)\equiv \sqrt{c}$ and $\rho(t,u)=\kappa(t,u)$.
\end{Remark}

Furthermore, let us introduce the following assumption, where we assume that $0<T\leq +\infty$:
\begin{enumerate}
\item[(H6)]
$\as,\ \RE y_1,y_2\in \R^k,z_1,z_2\in\R^{k\times d}$,
$$|g(\omega,t,y_1,z_1)-g(\omega,t,y_2,z_2)|\leq b(t)\bar\kappa^{1\over p}(|y_1-y_2|^p)+c(t)|z_1-z_2|,$$
\end{enumerate}
where $b(\cdot),c(\cdot):\T\mapsto \R^+$ satisfy $\int_0^T[b(t)+c^2(t)] \ {\rm d}t<+\infty$ and $\bar\kappa(\cdot)$ is a concave and nondecreasing function from $\R^+$ to $\R^+$ such that $\bar\kappa(0)=0$, $\bar\kappa(u)>0$ for $u>0$, and
$\int_{0^+}\bar\kappa^{-1}(u)\ {\rm d}u=+\infty$.

\begin{Remark}\label{remark4}
In next section, we will show that the concavity condition of $\bar\kappa(\cdot)$ in (H6) can be weakened to the continuity condition and that the bigger the $p$, the stronger the (H6).
\end{Remark}

The assumptions of $\bar \kappa(\cdot)$ in (H6) yield that there exists a constant $A>0$ such that for each $u\geq 0$, $\bar\kappa(u)\leq Au+A$. Then, from Theorem \ref{theoremmainresult} and Bihari's inequality, letting $\alpha(t)=b^{p-1\over p}(t)$, $\beta(t)=c(t)$ and $\rho(t,u)=b(t)\bar\kappa(u)\in {\bf S}[T,Ab(t),Ab(t)]$ in (H4), we can obtain the following corollary.

\begin{corollary}\label{corollary3}
Let $0<T\leq +\infty$ and $g$ satisfy (H5) and (H6). Then, for each $\xi\in\Lp$, the BSDE with parameters $(\xi,T,g)$ has a unique solution in $L^p$.
\end{corollary}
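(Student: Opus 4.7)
The plan is to realize Corollary \ref{corollary3} as a direct specialization of Theorem \ref{theoremmainresult} by choosing
$$\alpha(t)=b^{(p-1)/p}(t),\quad \beta(t)=c(t),\quad \rho(t,u)=b(t)\bar\kappa(u),$$
and verifying that with these choices, the hypotheses (H4) of Theorem \ref{theoremmainresult} follow from (H6).

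First, I would check the pointwise inequality in (H4). Since (H6) gives
$$|g(\omega,t,y_1,z_1)-g(\omega,t,y_2,z_2)|\leq b(t)\bar\kappa^{1/p}(|y_1-y_2|^p)+c(t)|z_1-z_2|,$$
factoring $b(t)=b^{(p-1)/p}(t)\cdot b^{1/p}(t)$ and recognizing $b(t)\bar\kappa(u)=\rho(t,u)$, this is exactly the form in (H4) with the chosen $\alpha$, $\beta$, $\rho$. Next, the integrability conditions demanded by (H4) reduce to
$$\int_0^T\!\alpha^{p/(p-1)}(t)\,{\rm d}t=\int_0^T\! b(t)\,{\rm d}t<+\infty,\qquad \int_0^T\!\beta^2(t)\,{\rm d}t=\int_0^T\! c^2(t)\,{\rm d}t<+\infty,$$
both of which are furnished by (H6).

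The main step is to verify that $\rho(t,u)=b(t)\bar\kappa(u)$ belongs to ${\bf S}[T,Ab(t),Ab(t)]$. For fixed $t$, $\rho(t,\cdot)$ inherits continuity, concavity, monotonicity and $\rho(t,0)=0$ from $\bar\kappa(\cdot)$. The hint about $A$ given before the statement, namely $\bar\kappa(u)\leq Au+A$ for all $u\geq 0$ (a straightforward consequence of the concavity of $\bar\kappa$ together with $\bar\kappa(0)=0$, obtained by comparing $\bar\kappa$ with its tangent at any fixed positive point), yields
$$\rho(t,u)=b(t)\bar\kappa(u)\leq Ab(t)+Ab(t)\,u,$$
and $\int_0^T 2Ab(t)\,{\rm d}t<+\infty$. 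It remains to show that the ODE
$$u'(t)=-\rho(t,u(t))=-b(t)\bar\kappa(u(t)),\qquad u(T)=0,$$
admits only the zero solution on $[0,T]$. This is precisely where Bihari's inequality enters: any nonnegative continuous solution satisfies $u(t)=\int_t^T b(s)\bar\kappa(u(s))\,{\rm d}s$, and the assumption $\int_{0^+}\bar\kappa^{-1}(u)\,{\rm d}u=+\infty$ together with $\int_0^T b(s)\,{\rm d}s<+\infty$ forces $u\equiv 0$ via the standard Bihari argument.

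Finally, (H5) is assumed directly, so all the hypotheses of Theorem \ref{theoremmainresult} are met, and the conclusion follows. The only potential obstacle is the verification that $\rho\in{\bf S}[T,Ab(\cdot),Ab(\cdot)]$, in particular the uniqueness of the trivial solution to the ODE, which is entirely handled by Bihari's inequality as indicated in the statement, so no genuinely new estimates are needed beyond applying the main theorem.
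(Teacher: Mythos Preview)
Your proposal is correct and follows exactly the paper's approach: the paper likewise sets $\alpha(t)=b^{(p-1)/p}(t)$, $\beta(t)=c(t)$, $\rho(t,u)=b(t)\bar\kappa(u)$, invokes the linear bound $\bar\kappa(u)\leq Au+A$ to place $\rho$ in ${\bf S}[T,Ab(\cdot),Ab(\cdot)]$, uses Bihari's inequality for the ODE uniqueness, and applies Theorem~\ref{theoremmainresult}. Your write-up is in fact more detailed than the paper's one-sentence justification.
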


\begin{Remark}\label{remark5}
Theorem 2.1 in \citet{Mao95} proved that if $0<T<+\infty $,
$g(\cdot,0,0)\in {\rm M}^2(0,T;\R^{k})$ and $g$ satisfies assumption (H1), then the BSDE with parameters $(\xi,T,g)$ has a unique solution in $L^2$. This result can be regarded as an
immediate consequence of Corollary \ref{corollary3}. Indeed, it follows from Remark \ref{remark2} that under the above assumptions, the generator $g$ must satisfy (H5) and (H6)
with $p=2$, $b(t)\equiv 1$, $c(t)\equiv \sqrt{c}$ and $\bar\kappa(u)=\kappa(u)$.
\end{Remark}

\begin{example}\label{example1}
Let $0<T<+\infty$, and let
$$g(t,y,z)={1\over \sqrt{t}} h(|y|)+{1\over
\sqrt[4]{t}}|z|+|B_t|,$$
where
$h(x):=x|\ln x|^{1/p}\cdot 1_{0<x\leq\delta}+(h'(\delta-)(x-\delta)+h(\delta))\cdot 1_{ x>\delta}$
with $\delta>0$ small enough. It is not difficult to verify that $g$ satisfies assumptions
(H5) and (H6) with $b(t)={1/\sqrt t},c(t)={1/\sqrt[4]{t}}$ and $\bar\kappa(u)=h^p(u^{1/p})$. Thus, Corollary \ref{corollary3} yields that for each $\xi\in\Lp$, the BSDE with parameters $(\xi,T,g)$ has a unique solution in $L^p$.
\end{example}

\begin{Remark}\label{remark6}
Proposition \ref{proposition3} in the beginning of Section \ref{sectionmainresult} can also be regarded as an immediate consequence of Corollary \ref{corollary3}. Indeed, if $g$
satisfies (H3), then $g$ satisfies (H6) with $b(t)=u(t)$, $c(t)=v(t)$ and $\bar\kappa(u)=u$ for $u\geq 0$.
\end{Remark}

In the following, we introduce an example where $T$ can be $+\infty.$

\begin{example}\label{example2}
Let $0<T\leq +\infty$, and let
$$g(t,y,z)={1\over (1+t)^2}\sigma(|y|)+{1\over 1+t}|z|
+{1\over (1+t)^2},$$
where $\sigma(x):=x(|\ln x|\ln|\ln x|)^{1/p}\cdot 1_{0<x\leq
\delta}+(\sigma'(\delta-)(x-\delta)+\sigma(\delta))\cdot 1_{ x>\delta}$ with $\delta>0$ small enough. It is not difficult to verify that $g$ satisfies assumptions (H5) and (H6) with $b(t)=1/(1+t)^{2},c(t)=1/(1+t)$ and $\bar\kappa(u)=\sigma^p(u^{1/p})$. Thus, it follows from Corollary \ref{corollary3} that for each $\xi\in\Lp$, the BSDE with
parameters $(\xi,T,g)$ has a unique solution in $L^p$.
\end{example}

Finally, let us make Remark \ref{remark7}, which illustrates an important difference between the infinite and finite $T$ cases.

\begin{Remark}\label{remark7}
It is clear that in the case where $T<+\infty$, the $(y_t)_{t\in \T}$ among the solution $(y_t,z_t)_{t\in\T}$ of BSDEs discussed in this paper belongs also to ${\rm M}^p(0,T;\R^{k})$.
However, in the case where $T=+\infty$, this conclusion does not hold true. For a simple example, letting $\xi\equiv 1$, $T=+\infty$ and $g\equiv 0$, from Theorem \ref{theoremmainresult} or Corollary \ref{corollary3} we know that the BSDE with parameters
$(\xi,T,g)$ has a unique solution in $L^p$. Obviously, this solution is just $(1,0)_{0\leq t\leq +\infty}$. The process $(1)_{0\leq t\leq+\infty}$ belongs to $\s^p(0,T;\R^{k})$, but it does not belong to ${\rm M}^p(0,T;\R^{k})$.
\end{Remark}\vspace{-0.3cm}

\section{Further Discussion}\label{sectionfurtherdiscussion}\setcounter{equation}{0}

In this section, some further discussions with respect to our main result will be given. First, let us examine Remark \ref{remark4}. We need to show that the concavity condition of $\bar\kappa(\cdot)$ in (H6) can be weakened to the continuity condtion and
that the bigger the $p$, the stronger the (H6). To be precise, we need to prove that if $g$ satisfies the following assumption (H7') with $q\geq p$, then $g$ must satisfy the following
assumption (H7).\vspace{0.2cm}

{(H7)} There exists a deterministic function $b(t):\T\To \R^+$ with $\int_0^T b(t)\ {\rm d}t<+\infty$ and a nondecreasing and concave function $\kappa(\cdot):\R^+\mapsto \R^+$ with $\kappa(0)=0$, $\kappa(u)>0$ for $u>0$, and $\int_{0^+}\kappa^{-1}(u)\ {\rm d}u=+\infty$ such that $\as,$
$$\RE y_1,y_2\in \R^k,z\in\R^{k\times d},\ \
|g(\omega,t,y_1,z)-g(\omega,t,y_2,z)|\leq b(t)\kappa^{1\over p}(|y_1-y_2|^p).$$

{(H7')} There exists a deterministic function $\bar b(t):\T\To \R^+$ with $\int_0^T \bar b(t)\ {\rm d}t<+\infty$ and a nondecreasing and continuous function $\bar\kappa(\cdot):\R^+\mapsto \R^+$ with $\bar\kappa(0)=0$, $\bar\kappa(u)>0$ for $u>0$, and
$\int_{0^+}\bar\kappa^{-1}(u)\ {\rm d}u=+\infty$ such that $\as,$
$$\RE y_1,y_2\in \R^k,z\in\R^{k\times d},\ \
|g(\omega,t,y_1,z)-g(\omega,t,y_2,z)|\leq \bar
b(t)\bar\kappa^{1\over q}(|y_1-y_2|^q).$$

In order to show this fact, we need the following technical Lemma proved in the Appendix.

\begin{lemma}\label{lemma5}
Let $\rho(\cdot)$ be a nondecreasing and concave function on $\R^+$ with $\rho(0)=0$. Then we have
\begin{equation}\label{equation23}
  \RE\ r>1,\ \ \rho^r(x^{1/r})\
  {\rm is\ also\  a\ nondecreasing\ and \ concave\ function\ on}\ \R^+.
\end{equation}
Moreover, if $\rho(u)>0$ for $u>0$, and $\int_{0^+}\rho^{-1}(u)\ {\rm d}u=+\infty$, then
\begin{equation}\label{equation24}
  \RE\ r<1,\ \ \ \int_{0^+} {{\rm d}u\over \rho^r(u^{1/r})}=+\infty.
\end{equation}
\end{lemma}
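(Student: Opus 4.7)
For part (1), the monotonicity of $x \mapsto \rho^r(x^{1/r})$ on $\R^+$ is immediate: it is the composition of three nondecreasing maps $x \mapsto x^{1/r}$, $\rho$, and $t \mapsto t^r$. For concavity, I would first establish the result under the temporary assumption that $\rho \in C^1(0,\infty)$ and remove that assumption at the end by approximation. Setting $h(x) := \rho(x^{1/r})^r$ and $u := x^{1/r}$, the chain rule gives
$$
h'(x) = \left(\frac{\rho(u)}{u}\right)^{r-1} \rho'(u).
$$
The key is that each of the three nonnegative factors is nonincreasing in $u$: $\rho(u)/u$ is nonincreasing because $\rho$ is concave with $\rho(0) = 0$; since $r-1 > 0$, raising to this power preserves the monotonicity, so $(\rho(u)/u)^{r-1}$ is nonincreasing; and $\rho'(u)$ is nonincreasing by concavity of $\rho$. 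Since $u$ is nondecreasing in $x$, $h'(x)$ is nonincreasing in $x$, which yields concavity of $h$. To remove the smoothness hypothesis, I would approximate $\rho$ by smooth concave nondecreasing $\rho_n$ with $\rho_n(0) = 0$ converging pointwise to $\rho$ (for instance via convolution against a nonnegative mollifier supported on the negative axis, which preserves monotonicity and the endpoint value), apply the smooth result to each $\rho_n$, and take the pointwise limit, using that pointwise limits of concave functions are concave.

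For part (2), the plan is the substitution $v = u^{1/r}$ (so ${\rm d}u = r\,v^{r-1}\,{\rm d}v$) followed by an elementary monotonicity lower bound. The substitution transforms the integral into
$$
\int_{0^+} \frac{{\rm d}u}{\rho^r(u^{1/r})} = r \int_{0^+} \frac{v^{r-1}}{\rho(v)^r}\,{\rm d}v = r \int_{0^+} \left(\frac{v}{\rho(v)}\right)^{r-1} \frac{{\rm d}v}{\rho(v)}.
$$
The ratio $\rho(v)/v$ is nonincreasing on $(0,\infty)$, as above, so $v/\rho(v)$ is nondecreasing. Because $r-1 < 0$, the function $(v/\rho(v))^{r-1}$ is then nonincreasing, hence for any fixed $v_0 > 0$ it is bounded below on $(0,v_0]$ by the strictly positive constant $(v_0/\rho(v_0))^{r-1}$. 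Pulling this constant out of the integral yields
$$
\int_0^{v_0} \left(\frac{v}{\rho(v)}\right)^{r-1} \frac{{\rm d}v}{\rho(v)} \geq \left(\frac{v_0}{\rho(v_0)}\right)^{r-1} \int_0^{v_0} \frac{{\rm d}v}{\rho(v)} = +\infty,
$$
by the hypothesis $\int_{0^+} {\rm d}v/\rho(v) = +\infty$, which is the desired divergence.

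The main technical obstacle I anticipate lies in the smoothing step for part (1): one must ensure the approximating sequence $\rho_n$ simultaneously preserves concavity, monotonicity, and the endpoint condition $\rho_n(0) = 0$, and that pointwise convergence is enough to transfer concavity to the composition $h$. A clean alternative is to avoid smoothing entirely and work directly with the right derivative $\rho'_+$, which exists everywhere on $(0,\infty)$, is nonincreasing, and makes the same product formula for $h'_+$ valid, so the monotonicity argument goes through as stated.
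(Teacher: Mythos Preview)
Your proof is correct, and it shares the same core ideas as the paper's: both parts rest on the fact that $\rho(u)/u$ is nonincreasing for a concave $\rho$ with $\rho(0)=0$, and part~(2) proceeds via the same substitution $v=u^{1/r}$ followed by a comparison with $\int_{0^+}{\rm d}v/\rho(v)$.

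The organizational difference is in part~(1). The paper assumes $\rho\in C^2$, computes $f''(x)$ explicitly, and then invokes Taylor's expansion at $0$ to obtain the key inequality $\rho'(t)t-\rho(t)\le 0$. Your route is a step more economical: you stop at the first derivative, write $h'(x)=\bigl(\rho(u)/u\bigr)^{r-1}\rho'(u)$, and observe that this is a product of nonnegative nonincreasing functions of $u$, which immediately gives concavity of $h$ without ever forming $f''$. This needs only $C^1$ (or, as you note, right derivatives), so the approximation step is lighter. In part~(2) the paper phrases the comparison as a $\liminf$ ratio test, while you use the equivalent direct lower bound $(v/\rho(v))^{r-1}\ge (v_0/\rho(v_0))^{r-1}$ on $(0,v_0]$; these are the same argument in different clothing.
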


Now, we can show that (H7')$\Longrightarrow$(H7). Let us assume that (H7') holds true for $g$. Then we have, $\as,$
$$
\RE y_1,y_2\in \R^k,z\in\R^{k\times d},\ \
|g(\omega,t,y_1,z)-g(\omega,t,y_2,z)|\leq \bar
b(t)\rho_1(|y_1-y_2|),
$$
where $\rho_1(u):=\bar \kappa^{1\over q}(u^q)$. Obviously, $\rho_1(u)$ is a continuous and nondecreasing function on $\R_+$ with $\rho_1(0)=0$ and $\rho_1(u)>0$ for $u>0$, but it is
not necessary to be concave. However, it follows from the classical theory of uniformly continuous functions that if $g$ satisfies the above inequality, then there exists a concave and nondecreasing function $\rho_2(\cdot)$ such that $\rho_2(0)=0$, $\rho_2(u)\leq 2\rho_1(u)$
for $u\geq 0$, and $\as,$
$$
\RE y_1,y_2\in \R^k,z\in\R^{k\times d},\ \
|g(\omega,t,y_1,z)-g(\omega,t,y_2,z)|\leq \bar
b(t)\rho_2(|y_1-y_2|).
$$
Thus, $\as,$
$$
\RE y_1,y_2\in \R^k,z\in\R^{k\times d},\ \
|g(\omega,t,y_1,z)-g(\omega,t,y_2,z)|\leq \bar b(t)\kappa^{1\over p}(|y_1-y_2|^p),
$$
where $\kappa(u):=\rho_2^p(u^{1\over p})+u$. It is clear that $\kappa(0)=0$ and $\kappa(u)>0$ for $u>0$. Moreover, it follows from \eqref{equation23} in Lemma \ref{lemma5} that $\kappa(\cdot)$ is also a nondecreasing and concave function due to the fact that $p>1$ and $\rho_2(\cdot)$ is a nondecreasing and concave function. Thus, to prove that (H7) holds, it suffices to
show that $\int_{0^+}\kappa^{-1}(u)\ {\rm d}u=+\infty$. Indeed, if $\rho_2(1)=0$, then since $\rho_2(u)=0$ for each $u\in[0,1]$, we have $\int_{0^+}\kappa^{-1}(u)\ {\rm d}u=\int_{0^+}u^{-1}\ {\rm d}u=+\infty$. On the other hand, if $\rho_2(1)>0$, since $\rho_2(\cdot)$ is a concave function with $\rho_2(0)=0$, we know that
\begin{equation}\label{equation25}
\rho_2(u)=\rho_2(u\cdot 1+(1-u)\cdot 0)\geq
u\rho_2(1)+(1-u) \rho_2(0)=u\rho_2(1),\ \ u\in [0,1],
\end{equation}
and then $\rho_2^{p}({u^{1\over p}})\geq \left(u^{1\over p} \rho_2(1) \right)^{p}=\rho_2^{p}(1)u$. Thus, we have
$$\RE\ u\geq 0,\ \ \kappa(u)=\rho_2^p(u^{1\over p})+u
\leq K\rho_2^p(u^{1\over p})\leq K2^p\rho_1^p(u^{1\over p})
=K2^p\bar \kappa^{p\over q}({u^{q\over p}}),$$
where $K=1+1/\rho_2^{p}(1)$. Consequently, if $q=p$, then
$$\int_{0^+}{{\rm d}u\over \kappa(u)}\geq {1\over K 2^p} \int_{0^+}{{\rm d}u\over\bar
\kappa(u)} =+\infty.$$
Thus, we have proved that (H7') with $q=p$ implies (H7). As a result, we can now assume that the $\bar \kappa (\cdot)$ in (H7') is a concave function. Then, if $q>p$, from \eqref{equation24} of Lemma \ref{lemma5} with $\rho(\cdot)=\bar \kappa(\cdot)$ and $r=p/q< 1$ we have
$$\int_{0^+}{{\rm d}u\over \kappa(u)}\geq {1\over
K 2^p} \int_{0^+}{{\rm d}u\over
 \bar \kappa^{p\over q}({u^{q\over p}})} =+\infty.$$
Hence (H7')$\Longrightarrow$(H7), i.e., the concavity condition of $\bar\kappa(\cdot)$ in (H6) can be weakened to the continuity condtion and the bigger the $p$, the stronger the (H6).\vspace{0.2cm}

Furthermore, let us introduce the following assumption (H6*), where we also assume that $0<T\leq +\infty$:

\begin{enumerate}
  \item[(H6*)] $\as,\ \RE y_1,y_2\in \R^k,z_1,z_2\in\R^{k\times d},$
       $$|g(\omega,t,y_1,z_1)-g(\omega,t,y_2,z_2)|\leq b(t)\kappa(|y_1-y_2|)+c(t)|z_1-z_2|,$$
\end{enumerate}
where $b(\cdot),c(\cdot):\T\mapsto \R^+$ satisfy $\int_0^T[b(t)+c^2(t)]\ {\rm d}t<+\infty$ and $\kappa(\cdot)$ is a continuous and nondecreasing function from $\T$ to $\R^+$ such that
$\kappa(0)=0$, $\kappa(u)>0$ for $u>0$, and $\int_{0^+}\kappa^{-p}(u)u^{p-1}\ {\rm d}u=+\infty$.\vspace{0.3cm}

In the following, we will show (H6*)$\Longrightarrow$(H6). In fact, if $g$ satisfies (H6*), then we have, $\as,\ \RE y_1,y_2\in\R^k,z_1,z_2\in\R^{k\times d}$,
$$|g(\omega,t,y_1,z_1)-g(\omega,t,y_2,z_2)|\leq
b(t)\bar\kappa^{1\over p}(|y_1-y_2|^p)+c(t)|z_1-z_2|,$$
where $\bar\kappa(u)=\kappa^p(u^{1/p})$. And, we have also
\begin{equation}\label{equation26}
\int_{0+}{{\rm d}u\over \bar\kappa(u)}=\int_{0+}{{\rm d}u \over
\kappa^p(u^{1\over p})}=\int_{0^+} {pu^{p-1}\over \kappa^p(u)}{\rm
d}u=+\infty.
\end{equation}
Thus, it follows from Remark \ref{remark4} that (H6) is true. Therefore, from Corollary \ref{corollary3} the following corollary is immediate. It follows from H\"{o}lder's inequality
that it generalizes the corresponding result in \citet{Cons01} where $p=2$, (H5) is replaced with $g(\cdot,0,0)\in {\rm M}^2(0,T;\R^{k})$, and $b(t)\equiv 1$, $c(t)\equiv c$ in (H6*).

\begin{corollary}\label{corollary4}
Let $0<T\leq +\infty$ and $g$ satisfy (H5) and (H6*). Then, for each $\xi\in\Lp$, the BSDE with parameters $(\xi,T,g)$ has a unique solution in $L^p$.
\end{corollary}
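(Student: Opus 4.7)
The plan is to reduce Corollary~\ref{corollary4} to Corollary~\ref{corollary3} by exhibiting a function $\bar\kappa(\cdot)$ so that (H6*) forces $g$ to satisfy (H6). The natural candidate is $\bar\kappa(u):=\kappa^p(u^{1/p})$, because then $\bar\kappa^{1/p}(|y_1-y_2|^p)=\kappa(|y_1-y_2|)$ and the modulus inequality in (H6*) rewrites verbatim as the inequality in (H6) with the same $b(\cdot)$ and $c(\cdot)$; the integrability condition $\int_0^T[b(t)+c^2(t)]\,{\rm d}t<+\infty$ is inherited directly from (H6*).

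The second step is to verify the Osgood-type divergence condition $\int_{0^+}\bar\kappa^{-1}(u)\,{\rm d}u=+\infty$, which is exactly the computation carried out in \eqref{equation26}: the substitution $u=v^p$ gives
$$\int_{0^+}\frac{{\rm d}u}{\kappa^p(u^{1/p})}=\int_{0^+}\frac{pv^{p-1}}{\kappa^p(v)}\,{\rm d}v=+\infty,$$
which is precisely the hypothesis $\int_{0^+}\kappa^{-p}(u)u^{p-1}\,{\rm d}u=+\infty$ included in (H6*). Continuity, monotonicity, $\bar\kappa(0)=0$ and positivity away from $0$ are all trivially transported from $\kappa$ to $\bar\kappa$.

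The one point where caution is needed, and what I expect to be the only real obstacle, is that $\bar\kappa$ need not be concave even though $\kappa$ is nondecreasing and continuous; the map $u\mapsto \kappa^p(u^{1/p})$ does not preserve concavity for $p>1$. This is exactly the situation addressed in Remark~\ref{remark4}, and its resolution is the (H7')$\Longrightarrow$(H7) argument already developed in this section: starting from a continuous nondecreasing $\bar\kappa$ with the divergence property, one majorizes by a concave nondecreasing function while still keeping the Osgood condition, using Lemma~\ref{lemma5} to transfer divergence of $\int_{0^+}\bar\kappa^{-1}$ to divergence for the concave majorant. Thus the full force of (H6) (with concavity) is recovered.

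Once (H6) is verified together with (H5) (which is assumed), Corollary~\ref{corollary3} applies directly and yields the existence and uniqueness of a solution in $L^p$ to the BSDE with parameters $(\xi,T,g)$, completing the proof. No new analytic input is required beyond the two ingredients already in the paper: the change of variables~\eqref{equation26} and the concavification procedure of Remark~\ref{remark4}.
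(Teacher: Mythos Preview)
Your proposal is correct and follows essentially the same route as the paper: define $\bar\kappa(u)=\kappa^p(u^{1/p})$, verify the Osgood condition via the change of variables \eqref{equation26}, invoke Remark~\ref{remark4} (the (H7')$\Rightarrow$(H7) concavification) to restore concavity, and conclude by Corollary~\ref{corollary3}. One minor remark: in this application the concavification argument is used with $q=p$, so the divergence of $\int_{0^+}\kappa^{-1}(u)\,{\rm d}u$ for the concave replacement follows directly from the majoration $\kappa(u)\leq K2^p\bar\kappa(u)$ rather than from \eqref{equation24} of Lemma~\ref{lemma5}; only \eqref{equation23} is needed, to ensure concavity of $\rho_2^p(u^{1/p})$.
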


\begin{Remark}\label{remark8}
According to the classical theory of uniformly continuous functions, we can assume that the $\kappa(\cdot)$ in (H6*) is a concave function. Thus, applying \eqref{equation24} of Lemma
\ref{lemma5} yields, by letting $\rho(u)=\kappa^q(u^{1/q})$ and $r=p/q$ with $q>p$, that if $\int_{0+}\kappa^{-q}(u^{1/q})\ {\rm d}u=+\infty$, then $\int_{0+}\kappa^{-p}(u^{1/p})\ {\rm d}u=+\infty$. As a result, noticing \eqref{equation26} we know that the bigger the $p$, the stronger the (H6*).
\end{Remark}

Finally, it should be noted that the conclusions of Example \ref{example1} and Example \ref{example2} can also be obtained by virtue of Corollary \ref{corollary4}.\vspace{-0.3cm}

\section{Appendix}\label{sectionappendix}\setcounter{equation}{0}
{\bf Proof of Lemma \ref{lemma5}.} Assume first that $r>1$ and define $f(x)=\rho^r(x^{1/r})$. By means of approximation
procedures in \citet{Cons01} we know that in order to prove \eqref{equation23} it will be enough to show that $\RE\ x\geq 0,\ f''(x)\leq 0$ holds true for each function $\rho(\cdot)\in C^2(\R^+,\R^+)$ with $\rho(0)= 0$, $\rho'(x)\geq 0$ and $\rho''(x)\leq 0$ for $x\geq 0$. Indeed, we have
$f'(x)=\rho^{r-1}(x^{1\over r})\cdot x^{{1\over r}-1}\cdot \rho'(x^{1\over r})$, and then
$$f''(x)={(r-1)t\rho^{r-2}(t)\rho'(t)\over rx^2}
\left[\rho'(t)t-\rho(t)\right]+{t^2\rho^{r-1}(t)\rho''(t)\over
px^2}\vspace{-0.2cm}$$
with $t=x^{1/r}$. Considering that $\rho(t)\geq 0$, $\rho'(t)\geq 0$ and $\rho''(t)\leq 0$, it suffices to prove that $\rho'(t)t-\rho(t)\leq 0$. Note that Taylor's expansion yields that $0=\rho(0)=\rho(t)-t\rho'(t)+t^2{\rho''(\xi_t)/2}$ for $t>0$ and some $\xi_t\in (0,t)$. Since $\rho''(\xi_t)\leq 0$, the preceding relation proves $\rho'(t)t-\rho(t)\leq 0$. Then \eqref{equation23} is proved.\vspace{0.2cm}

Next we prove \eqref{equation24}. Let $r<1$, $\rho(u)>0$ for $u>0$, and $\int_{0^+}\rho^{-1}(u)\ {\rm d}u=+\infty$. Similar to \eqref{equation25} we know that $\RE\ u\in [0,1],\ \rho(u)\geq u\rho(1)$. Consequently, we have
$$\int_{0^+}{u^{\frac{1-r}{r}}\ {\rm d}u\over r\rho(u^{1\over r})}=\int_{0^+}{{\rm d}u\over
\rho(u)}=+\infty$$
and
$$\liminf_{u\To 0^+}{\frac{1}{\rho^{r}(u^{1\over r})} \over \frac{u^{\frac{1-r}{r}}}{\rho(u^{1\over r})}}
=\liminf_{u\To 0^+} \left(\rho(u^{1\over r})\over u^{1\over r}\right)^{1-r}\geq[\rho(1)]^{1-r}>0,$$
from which \eqref{equation24} follows immediately. The proof of Lemma \ref{lemma5} is completed. \vspace{-0.3cm} \hfill $\Box$

\section*{Acknowledgement(s)}
The authors are very grateful to the anonymous referees for their valuable comments and correcting some errors, and especially for their improving greatly the written language. The research has been supported by the National Natural Science Foundation of China (N0.
10971220), the FANEDD (N0.200919), the National Basic Research
Program of China (No. 2007CB814901), Qing Lan Project, and the Fundamental Research Funds for the Central Universities (No. 2010LKSX04).\vspace{-0.3cm}



\begin{thebibliography}{12}

\bibitem[Briand et al.(2003)]{Bri03}Ph. Briand, B. Delyon,
Y. Hu, E. Pardoux, L. Stoica, {\it $L^p$ solutions of
backward stochastic differential equations}, Stochastic Processes
and Their Applications 108 (2003), pp. 109--129.

\bibitem[Briand and Hu(2006)]{Bri06}Ph. Briand, Y. Hu, {\it
BSDE with quadratic growth and unbounded terminal value},
Probability Theory and Related Fields 136 (2006), pp. 604--618.

\bibitem[Briand and Hu(2008)]{Bri08}Ph. Briand, Y. Hu, {\it
Quadratic BSDEs with convex generators and unbounded terminal
conditions}, Probability Theory and Related Fields 141 (2008), pp. 543--567.

\bibitem[Chen(1997)]{Chen97}Z. Chen, {\it Existence of
solutions to backward stochastic differential equations with
stopping time}, Chinese Science Bulletin 42 (1997), pp. 2379--2383.

\bibitem[Chen and Wang(2000)]{Chen00}Z. Chen, B. Wang,
{\it Infinite time interval BSDEs and the convergence of
$g$-martingales}, J. Austral. Math. Soc. (Series A) 69 (2000), pp. 187--211.

\bibitem[Constantin(2001)]{Cons01}G. Constantin, {\it On the existence and uniqueness of adapted solutions for backward stochastic differential
equations},  Analele Universit\u{a}\c{t}ii din Timi\c{s}oara,Seria
Matematic\u{a}-Informatic\u{a} XXXIX (2001), pp. 15--22.

\bibitem[Kobylanski(2000)]{Kob00} M. Kobylanski,
{\it Backward stochastic differential
 equations and partial equations with quadratic growth},
 Ann.Probab. 28 (2000), pp. 259--276.

\bibitem[Lepeltier and San Martin(1997)]{Lep97}J.P. Lepeltier,
J. San Martin, {\it Backward stochastic differential equations
with continuous coefficient}, Statistics and Probability Letters
32 (1997), pp. 425--430.

\bibitem [Mao(1995)]{Mao95}X. Mao, {\it Adapted solutions of backward stochastic differential
equations with non-Lipschitz cofficients}, Stochastic Process and
Their Applications 58 (1995), pp. 281--292.

\bibitem[Pardoux(1999)]{Par99}E. Pardoux, {\it BSDEs, weak convergence
and homogenization of semilinear PDEs}, Nonlinear Analysis,
Differential Equations and Control (Montreal, QC,1998), Kluwer
Academic Publishers, Dordrecht, 1999, pp. 503--549.

\bibitem[Pardoux and Peng(1990)]{Par90}E. Pardoux, S. Peng, {\it Adapted solution of a backward
stochastic differential equation}, Systems Control Letters
14 (1990), pp. 55--61.

\bibitem[Wang and Huang(2009)]{Wang09}Y. Wang, Z. Huang, {\it Backward stochastic differential equations with non-Lipschitz
coefficients}, Statistics and Probability Letters 79 (2009), pp. 1438--1443.

\bibitem[Wang and Wang(2003)]{Wang03}Y. Wang, X. Wang, {\it Adapted solutions of backward SDE with
non-Lipschitz coefficients}, Chinese J. Appl. Probab. Statist. 19 (2003), pp. 245--251 (in chinese).

\end{thebibliography}

\end{document}